\documentclass[11pt,reqno]{amsart}
\usepackage{amssymb,eucal,latexsym}

\usepackage{color}
\usepackage{hyperref}

\newcommand{\dnw}{\mathbin{\downarrow}}
\newcommand{\upw}{\mathbin{\uparrow}}

\numberwithin{equation}{subsection}

\theoremstyle{plain}
\newtheorem{lemma}{Lemma}
\newtheorem{theorem}[lemma]{Theorem}
\newtheorem{corollary}[lemma]{Corollary}

\newtheorem{proposition}[lemma]{Proposition}

\theoremstyle{definition}
\newtheorem{definition}[lemma]{Definition}
\newtheorem{example}[lemma]{Example}

\begin{document}
\sloppy

\author[N.\,A. Bazhenov]{Nikolay A. Bazhenov}
\address[N.\,A. Bazhenov]{Novosibirsk State University\\Pirogova str. 1\\630090 Novosibirsk\\Russia}
\email{nickbazh@yandex.ru}

\author[I.\,Sh. Kalimullin]{Iskander Sh. Kalimullin}
\address[I.\,Sh. Kalimullin]{Kazan (Volga Region) Federal University\\Kremlevskaya str. 18\\420008 Kazan\\Russia}
\address[I.\,Sh. Kalimullin]{Novosibirsk State University\\Pirogova str. 1\\630090 Novosibirsk\\Russia}
\email{Iskander.Kalimullin@kpfu.ru}

\author[M.\,V. Schwidefsky]{Marina V. Schwidefsky}
\address[M.\,V. Schwidefsky]{Novosibirsk State University\\Pirogova str. 1\\630090 Novosibirsk\\Russia}
\address[M.\,V. Schwidefsky]{Sobolev Institute of Mathematics SB RAS\\ Acad. Koptyug ave. 4\\630090 Novosibirsk\\Russia}
\email{m.schwidefsky@g.nsu.ru}

\thanks{The work is supported by the Mathematical Center in Akademgorodok under the agreement No.~075-15-2025-349 with the Ministry of Science and Higher Education of the Russian Federation.}

\title[Effective Stone duality]{\sc An effective version of the Stone duality}

\subjclass[2020]{03D45, 03D78, 06A06, 54B35, 54D35, 54H99}
\keywords{Poset, spectrum, spectral space, Stone duality, computable topology, computable dimension}

\date{\today}

\begin{abstract}
	The paper studies computability-theoretic aspects of topological $T_0$-spaces. We introduce effective versions of the notions of a countable $c$-poset and a (second-countable) topological space with base. Based on this, we prove an effective version of the known Stone-type duality between the category $\mathbf{AS}$ (whose objects are almost semispectral spaces with base and whose morphisms are spectral mappings) and the category $\mathbf{DP}$ (whose objects are distributive $c$-posets and whose morphisms are strict mappings). Namely, we show that for an arbitrary set $Z\subseteq \omega$, this duality is preserved when one restricts to objects which have $Z$-computably enumerable presentations only. Following this approach, we establish several results in computable topology. We prove that every degree spectrum of a countable algebraic structure can be realized as the degree spectrum of a topological space with base. We show that for any non-zero natural number $N$, there is a computable topological space with base that has precisely $N$-many computable copies, up to effective spectral homeomorphisms.
\end{abstract}	

\maketitle

\thispagestyle{empty}

\section*{Introduction}

\noindent
In 1937, M.\,H. Stone \cite{Stone2} established the dual equivalence of the category of distributive lattices with $0$ and $1$ (distributive $(0,1)$-lattices) with
lattice homomorphisms preserving the two constants, $0$ and $1$, ($(0,1)$-lattice homomorphisms) as morphisms and the category of spectral spaces with spectral maps as morphisms. This result extended an earlier result of his \cite{Stone1} on the dual equivalence of the category of Boolean algebras with homomorphisms and the category of Stone spaces (that is, compact totally disconnected Hausdorff spaces) with continuous maps.
Later in 1970, H.\,A. Priestley \cite{P1,P2} established another type of duality (nowadays, generally referred to as the Priestley duality).

Since that time, there was a fair amount of different successful attempts to extend the Stone duality to a wider class of partially ordered sets than distributive bounded lattices. It is very hard to present here a complete account on the papers dedicated to this topic. We mention here just two of them---the paper \cite{GJ} by L.\,J. Gonz\'{a}lez and R. Jansana and the paper \cite{CG} by S.\,A. Celani and L.\,J. Gonz\'{a}lez. In the first one, a Stone-type duality was established for the category
of all posets, while in the second one, Stone-type dualities were found for distributive meet-semilattices and lattices (not necessarily distributive).

In \cite{ES1,ES2} a representation theorem for the so-called distributive $c$-posets was proved.
In \cite{S1,S2}, this representation theorem was extended to a Stone-type duality which,
in its turn, is an extension of the Stone duality for distributive $(0,1)$-lattices was made to the category of the distributive $c$-posets, see Definition
\ref{DP} below. It is a curious fact that the duality result of L.\,J. Gonz\'{a}lez and R. Jansana \cite{GJ} is a particular case of the duality
from \cite{S1,S2}---this is demonstrated in M.\,M. Mingott Fernandez \cite{MF}.

Another source of our motivation is provided by the developments in computable analysis and topology (we refer to the monographs~\cite{Wei00,DowneyMelnikov} for the background in this area). On the one hand, recent works gained significant insight into effective content of the classical Stone duality for Boolean algebras. For example, M. Harrison-Trainor, A. Melnikov, and K.\,M. Ng \cite{HMN-20} proved the following result: a countable Boolean algebra $\mathcal{B}$ is isomorphic to a computable algebraic structure if and only if its dual Stone space $\operatorname{St}(\mathcal{B})$ is homeomorphic to a computable Polish space. For further results on Stone spaces (viewed as effectively presented Polish spaces) we refer to, e.g., \cite{HMN-20,HKS-ta,BHM-23,MN-23}. We note that these works heavily employ techniques from computable structure theory~\cite{DowneyMelnikov}, in particular, the notion of the degree spectrum of an algebraic structure, see, e.g., \cite{HKS-ta}.

On the other hand, there are several known definitions of the following notion: which second-countable topological spaces could be called computable---we refer to, e.g., \cite{KW-85,Spreen-90,GW}. We note that recent papers~\cite{BMN-24,MNH-ta} have studied computable topological presentations for Polish spaces. M. Hoyrup, A. Melnikov, and K.\,M. Ng~\cite{MNH-ta} proved that every countably-based $T_0$-space has a computable topological presentation in the sense of~\cite{GW}. We refer to, e.g., \cite{BR-ta} for further results on computable topological spaces. Following this line of research, here we also focus on the following question: which notion of a computable topological space is suitable for establishing effective versions of Stone-type dualities of~\cite{ES1,ES2,S1}?

In this paper, we give an effective version of the Stone duality from \cite{S1,S2}.
Some of the results presented here were announced in \cite{BKS-AL}.

The paper is organized as follows. Section~\ref{sect:prelim} discusses the preliminaries. Section~\ref{sect:spectra} contains the necessary results on spectra of posets. In particular, Theorem~\ref{T:Stone} formulates the main Stone-type duality proved in~\cite{S1}---the duality between the category $\mathbf{AS}$ (whose objects are almost semispectral spaces with base and whose morphisms are spectral mappings) and the category $\mathbf{DP}$ (whose objects are distributive $c$-posets and whose morphisms are strict mappings). 

Section~\ref{sect:gen-obj} gives the notions of effective presentations for objects from the categories $\mathbf{AS}$ and $\mathbf{DP}$. 
For a given set $Z\subseteq \omega$, we define a $Z$-computably enumerable (or $Z$-c.e., for short) $c$-poset and a $Z$-computably enumerable space with base (Definitions~\ref{def:comp-c-poset} and~\ref{def:comp-top-space-with-base}). Theorem~\ref{prop:complexity-presentations} and Corollary~\ref{C:Stone-Z} establish an effective version of Theorem~\ref{T:Stone}: a $c$-poset $\mathcal{P}$ from $\mathbf{DP}$ has a $Z$-c.e.\ presentation if and only if its dual topological space with base $\mathsf{T}(\mathcal{P})$ from $\mathbf{AS}$ has a $Z$-c.e.\ presentation. 

In Definition~\ref{def:Z-comp-things}, we additionally define \emph{$Z$-computable} $c$-posets and spaces with base. Following the approach of~\cite{HMN-20,HKS-ta}, we introduce the notion of the degree spectrum for a space with base $\mathbb{X}$ (Definition~\ref{def:spectra-of-spaces}), and we prove that every degree spectrum of a countable algebraic structure $\mathcal{S}$ can be realized as the degree spectrum of an appropriately chosen space with base $\mathbb{X}_{\mathcal{S}}$ (Theorem~\ref{theo:degree-spectra}). We note that, to our best knowledge, it is still unknown whether an analogue of Theorem~\ref{theo:degree-spectra} holds for degree spectra of Polish spaces $\mathbb{Y}$ (up to homeomorphism).

Section~\ref{sect:semilat} considers some familiar subcategories of the category $\mathbf{DP}$: for example, the category $\mathbf{DL}$ (whose objects are distributive lattices and whose morphisms are strict lattice homomorphisms) which is dual to the category $\mathbf{ASpec}$ (whose objects are almost spectral spaces and whose morphisms are spectral mappings). Among other things, Theorem~\ref{theo:semi_lattices} proves that a distributive lattice $\mathcal{D}$ is isomorphic to a $Z$-computable (in the usual sense of computable structure theory) lattice if and only if the dual $\mathbf{ASpec}$-space $\mathsf{T}(\mathcal{D})$ has a $Z$-computable presentation. 

We also introduce effective versions for the notions of morphisms (Definition~\ref{def:Z-comp-spectral}). We show that for a natural number $N\geq 1$, there is a computable topological space with base that has precisely $N$-many computable copies, up to effective spectral homeomorphisms (Corollary~\ref{cor:comp-dim}).

\section{Preliminaries}\label{sect:prelim}

\noindent
Before presenting the Stone-type dualities obtained in~\cite{S1,S2}, here we discuss the necessary background on posets and topological spaces. 

Suppose that $\mathbf{Cat}$ is a category, and $X$ is an object from $\mathbf{Cat}$. We say that an object $Y$ from $\mathbf{Cat}$ is a \emph{copy} (or a presentation) of $X$ if $Y$ is $\mathbf{Cat}$-isomorphic to $X$.

\medskip

\subsection{Distributive posets}
{\ }

A closure operator $\varphi$ on a set $P$ is \emph{algebraic} if $\varphi(A)=\bigcup_{F\subseteq_{fin}A}\varphi(F)$ for each $A\subseteq P$.

\begin{definition}\label{DP}\cite{BS}
A $c$-\emph{poset} is a structure $\mathcal{P}=\langle P;\leq,\varphi\rangle$ such that:
\begin{enumerate}
\item[(i)]
$\langle P;\leq\rangle$ is a poset;
\item[(ii)]
$\varphi$ is an algebraic closure operator on $P$ such that $\varphi(\varnothing)=\varnothing$ and, for all $x,y\in P$,
\[
x\leq y\quad\text{if and only if}\quad\varphi(x)\subseteq\varphi(y).
\]
\end{enumerate}
A $c$-poset $\mathcal{P}=\langle P;\leq,\varphi\rangle$ is \emph{distributive} if the lattice $\operatorname{Id}\mathcal{P}$ of $\varphi$-closed subsets of $P$ is distributive.

Each $\varphi$-closed subset of $P$ is called a $\varphi$-\emph{ideal} of $\langle P;\leq\rangle$ or just an \emph{ideal} of $\mathcal{P}$.
An ideal $I\in\operatorname{Id}\mathcal{P}$ is \emph{proper} if $I\notin\{\varnothing,P\}$.
A set $F\subseteq P$ is a \emph{filter} of $\langle P;\leq\rangle$ if it is a down-directed upper cone with respect to $\leq$.
\end{definition}

\begin{example}
For a poset $\langle P;\leq\rangle$ and for a set $A\subseteq P$, let $\dnw A$ denote the \emph{lower cone} in $\langle P;\leq\rangle$ generated by $A$;
that is, $\dnw A=\{p\in P\mid p\leq a\ \text{for some}\ a\in A\}$.

It is clear that $\langle P;\leq,\varphi\rangle$ with $\varphi(A)=\dnw A$ for $A\subseteq P$ is a distributive $c$-poset.
\end{example}

\noindent
For a subset $X$ in a poset $\mathcal{P}=\langle P;\leq\rangle$, we denote the set of all lower bounds for $X$ in $\mathcal{P}$ by $L(X)$
and the set of all upper bounds for $X$ in $\mathcal{P}$ by $U(X)$. Then we have for each subset $X\subseteq P$:
\[
\bigcap_{x\in X}\varphi(x)=L(X)\in\operatorname{Id}\mathcal{P}.
\]

\begin{lemma}\label{L:BS}\cite[Proposition 3.1]{BS},
For a $c$-poset $\mathcal{P}=\langle P;\leq,\varphi\rangle$ and for a proper ideal $I$ of $\mathcal{P}$, the following conditions are equivalent.
\begin{enumerate}
\item
The set $P{\setminus}I$ is a filter of $\langle P;\leq\rangle$.
\item
$I$ is a $\cap$-prime element of the closure lattice $\operatorname{Id}\mathcal{P}$.
\item
Inclusion $L(a_0,a_1)\subseteq I$ implies that $a_i\in I$ for some $i<2$.
\end{enumerate}
\end{lemma}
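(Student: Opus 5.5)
We prove the cycle (1)$\Rightarrow$(3)$\Rightarrow$(2)$\Rightarrow$(1). Two facts are used throughout. First, since $\varphi$ is a closure operator with $x\le y\iff\varphi(x)\subseteq\varphi(y)$, every ideal $J$ is a lower cone: if $y\in J$ and $x\le y$, then $x\in\varphi(x)\subseteq\varphi(y)\subseteq J$. Second, $L(X)=\bigcap_{x\in X}\varphi(x)$ is an ideal, as noted just before the lemma. In particular $L(a_0,a_1)=\varphi(a_0)\cap\varphi(a_1)$, which is the meet of $\varphi(a_0)$ and $\varphi(a_1)$ in $\operatorname{Id}\mathcal{P}$, since the meet of closed sets is their intersection.

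(1)$\Rightarrow$(3). Let $F=P\setminus I$ be a filter and suppose $L(a_0,a_1)\subseteq I$. Assume, for a contradiction, that $a_0,a_1\notin I$, so $a_0,a_1\in F$. Since $F$ is down-directed, there is $c\in F$ with $c\le a_0$ and $c\le a_1$. Then $c\in L(a_0,a_1)\subseteq I$, contradicting $c\in F$.

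(3)$\Rightarrow$(2). Let $J_0,J_1$ be ideals with $J_0\cap J_1\subseteq I$, and suppose neither $J_i\subseteq I$. Pick $a_i\in J_i\setminus I$. Then $\varphi(a_i)\subseteq J_i$, so
\[
L(a_0,a_1)=\varphi(a_0)\cap\varphi(a_1)\subseteq J_0\cap J_1\subseteq I.
\]
By (3), some $a_i\in I$, a contradiction. Hence $I$ is $\cap$-prime; properness of $I$ is given by hypothesis.

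(2)$\Rightarrow$(1). Put $F=P\setminus I$. It is nonempty because $I\ne P$. It is an upper cone because $I$ is a lower cone. For down-directedness, take $a_0,a_1\in F$. Since $\varphi(a_i)$ is an ideal containing $a_i$, we have $\varphi(a_i)\not\subseteq I$. As $I$ is $\cap$-prime, $\varphi(a_0)\cap\varphi(a_1)=L(a_0,a_1)\not\subseteq I$. So there is $c\le a_0,a_1$ with $c\notin I$, i.e.\ $c\in F$, as required.

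The only point needing care is whether "$\cap$-prime" is read as the lattice-theoretic meet-prime condition $J_0\cap J_1\subseteq I\Rightarrow J_i\subseteq I$. This is indeed the relevant notion, because meets in $\operatorname{Id}\mathcal{P}$ are intersections. If the definition instead uses equality $J_0\cap J_1=I$, we reduce to the inclusion form by replacing $J_i$ with the ideals $J_i\cup I$-closure. A simpler route is to apply the argument directly to $\varphi(a_i)$ joined with $I$; but with inclusion-based meet-primeness no such adjustment is needed. Nowhere do we use distributivity or algebraicity, only the closure properties.
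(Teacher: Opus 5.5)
Your cycle (1)$\Rightarrow$(3)$\Rightarrow$(2)$\Rightarrow$(1) is correct, reading $\cap$-prime as meet-prime: $J_0\cap J_1\subseteq I$ implies $J_0\subseteq I$ or $J_1\subseteq I$. The paper gives no proof of its own; it imports the lemma from \cite[Proposition 3.1]{BS}. Your argument is the standard direct verification. It uses only two facts: ideals are lower cones, and $L(a_0,a_1)=\varphi(a_0)\cap\varphi(a_1)$ is the meet of $\varphi(a_0)$ and $\varphi(a_1)$ in $\operatorname{Id}\mathcal{P}$.

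You should, however, delete the claim in your last paragraph about the equality-based reading, i.e.\ meet-irreducibility ($J_0\cap J_1=I$ implies $I\in\{J_0,J_1\}$). You say it reduces to the inclusion form by passing to $\varphi(J_i\cup I)$. That reduction needs $\bigl(J_0\vee I\bigr)\cap\bigl(J_1\vee I\bigr)=(J_0\cap J_1)\vee I$, where $J\vee I=\varphi(J\cup I)$. This identity is distributivity of $\operatorname{Id}\mathcal{P}$, which the lemma does not assume.

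Under the equality reading the lemma is in fact false. Let $P=\{a,b,c\}$ be an antichain with $\varphi(\varnothing)=\varnothing$, $\varphi(\{x\})=\{x\}$, and $\varphi(X)=P$ whenever $|X|\geq 2$. Then $\operatorname{Id}\mathcal{P}\cong M_3$.
\begin{itemize}
\item The ideal $I=\{a\}$ is meet-irreducible.
\item It is not meet-prime, since $\{b\}\cap\{c\}=\varnothing\subseteq I$.
\item $P\setminus I=\{b,c\}$ is not down-directed, so (1) fails.
\end{itemize}
So the inclusion reading is not merely convenient: it is the only one under which the statement holds for arbitrary $c$-posets. This matches your correct observation that the main argument never uses distributivity.
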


\begin{definition}\label{Id}\cite{BS}
Let $\mathcal{P}=\langle P;\leq,\varphi\rangle$ be a $c$-poset.
A proper ideal $I$ of $\mathcal{P}$ is \emph{prime} if it satisfies one of the equivalent statements of Lemma \ref{L:BS}.
By $\operatorname{Spec}\mathcal{P}$, we denote the set of all prime ideals of $\mathcal{P}$.
\end{definition}

\begin{theorem}\label{BS}\cite[Theorem 3.3]{BS}
Let $\mathcal{P}=\langle P;\leq,\varphi\rangle$ be a distributive $c$-poset, let $I\subseteq P$ be a nonempty ideal of $\mathcal{P}$ and let $F\subseteq P$
be a nonempty down-directed set such that $I\cap F=\varnothing$. Then there is a prime ideal $Q\subseteq P$ such that $I\subseteq Q$ and $Q\cap F=\varnothing$.
\end{theorem}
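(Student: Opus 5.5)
Zorn's lemma gives a maximal ideal disjoint from $F$; we show it is prime using distributivity of $\operatorname{Id}\mathcal{P}$, exactly as in the classical prime ideal theorem for distributive lattices.

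Consider the family $\mathcal{S}$ of all ideals $J\in\operatorname{Id}\mathcal{P}$ with $I\subseteq J$ and $J\cap F=\varnothing$, ordered by inclusion. It is nonempty since $I\in\mathcal{S}$. Because $\varphi$ is algebraic, the union of a chain of $\varphi$-closed sets is $\varphi$-closed: if $a\in\varphi(A)$ with $A$ contained in the union, then $a\in\varphi(G)$ for some finite $G\subseteq A$, and $G$ lies in a single member of the chain. This union is still disjoint from $F$. By Zorn's lemma there is a maximal $Q\in\mathcal{S}$. Then $Q\neq\varnothing$ because $I\neq\varnothing$, and $Q\neq P$ because $F\neq\varnothing$. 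Hence $Q$ is proper.

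It remains to verify condition (2) of Lemma~\ref{L:BS}, that $Q$ is $\cap$-prime in $\operatorname{Id}\mathcal{P}$. Suppose $J_0\cap J_1\subseteq Q$ for ideals $J_0,J_1$, but $J_i\not\subseteq Q$ for both $i<2$. Write $\vee$ for the join in $\operatorname{Id}\mathcal{P}$, so $A\vee B=\varphi(A\cup B)$. Each $Q\vee J_i$ strictly contains $Q$, so by maximality it meets $F$; pick $f_i\in (Q\vee J_i)\cap F$. Since $F$ is down-directed, there is $f\in F$ with $f\le f_0,f_1$. Ideals are down-closed, because $x\le y$ means $\varphi(x)\subseteq\varphi(y)$ and hence $x\in\varphi(y)$. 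So $f\in (Q\vee J_0)\cap(Q\vee J_1)$. By distributivity of $\operatorname{Id}\mathcal{P}$,
\[
(Q\vee J_0)\cap(Q\vee J_1)=Q\vee(J_0\cap J_1)=Q,
\]
so $f\in Q\cap F$, a contradiction. Therefore $Q$ is prime, and it satisfies $I\subseteq Q$ and $Q\cap F=\varnothing$.

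The main point needing care is the step where maximality yields elements of $F$ in $Q\vee J_i$, combined with using only down-directedness of $F$ rather than $F$ being a filter. Down-closure of $Q\vee J_i$ handles this, so I expect no real obstacle. If one preferred condition (3) of Lemma~\ref{L:BS}, the same argument applies with $J_i=\varphi(a_i)$, since $L(a_0,a_1)=\varphi(a_0)\cap\varphi(a_1)$.
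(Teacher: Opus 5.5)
Your proof is correct. The paper gives no argument for this statement; it quotes it from \cite[Theorem 3.3]{BS}, so there is no in-paper proof to compare against. What you wrote is the standard maximal-ideal (Zorn's lemma) proof of the prime ideal theorem, carried over to the closure lattice $\operatorname{Id}\mathcal{P}$, and each step holds:
\begin{itemize}
\item Algebraicity of $\varphi$ makes the union of a nonempty chain in $\mathcal{S}$ closed, and $I$ bounds the empty chain.
\item Ideals are down-sets, since $x\le y\in J$ gives $x\in\varphi(x)\subseteq\varphi(y)\subseteq J$. This is exactly what lets mere down-directedness of $F$ suffice.
\item Meets in $\operatorname{Id}\mathcal{P}$ are intersections, so the finite distributive law gives $(Q\vee J_0)\cap(Q\vee J_1)=\varphi\bigl(Q\cup(J_0\cap J_1)\bigr)=Q$.
\item Lemma~\ref{L:BS} turns $\cap$-primeness of the proper ideal $Q$ into primeness in the sense of Definition~\ref{Id}.
\end{itemize}
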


\begin{definition}\label{D:Pm}\cite{S1}
Let $\mathbf{DP}$ denote the category whose objects are distributive $c$-posets and whose morphisms are mappings $f\colon P_0\to P_1$,
where $\mathcal{P}_0=\langle P_0;\leq,\varphi_0\rangle$ and $\mathcal{P}_1=\langle P_1;\leq,\varphi_1\rangle$ are distributive $c$-posets,
which satisfy the following condition:
\begin{enumerate}
\item[--]
$f$ is \emph{strict}; that is, $f^{-1}(I)$ is a prime ideal of $\mathcal{P}_0$ for each prime ideal $I$ of $\mathcal{P}_1$.
\end{enumerate}
If a surjection $g\colon P_0\to P_1$ satisfies the following properties:
\begin{enumerate}
\item[--]
$a\leq b$ in $\mathcal{P}_0$ if and only if $g(a)\leq g(b)$ in $\mathcal{P}_1$ for all $a,b\in P_0$;
\item[--]
$g\bigl(\varphi_0(X)\bigr)=\varphi_1\bigl(g(X)\bigr)$ for all $X\subseteq P_0$,
\end{enumerate}
then $g$ is called an \emph{isomorphism} of $c$-posets.
\end{definition}

\medskip

\subsection{Topological spaces with base}{\ }

For all the topological notions which we do not define here, we refer to the monograph of Yu.\,L. Ershov \cite{Top} as well as to the one of
G. Gierz \emph{et al.} \cite{Comp}.

A topological $T_0$-space $\mathbb{X}$ is [\emph{almost}] \emph{sober} if, for each closed [proper] nonempty set $F\subseteq X$, there is $x\in X$ such that $F=\dnw x$.
We note that, whenever we speak of a partial order in a topological $T_0$-space, we always mean the \emph{specialization order}.

\begin{definition}\label{D:B}
\cite{S1,S2}
A triple $\mathbb{X}=\langle X,\mathcal{T},\mathcal{B}\rangle$ is a \emph{topological space with base} or just a \emph{space with base}, if
the following conditions are satisfied:
\begin{enumerate}
\item[(i)]
$\langle X,\mathcal{T}\rangle$ is a topological $T_0$-space and $\mathcal{B}$ forms a basis of the topology $\mathcal{T}$;
\item[(ii)]
$\varnothing\in\mathcal{B}$ if and only if $\mathbb{X}$ is sober and the poset $\langle\mathcal{B};\subseteq\rangle$ is down-directed;
\item[(iii)]
$X\in\mathcal{B}$ if and only if $\mathbb{X}$ is compact and the poset $\langle\mathcal{B};\subseteq\rangle$ is up-directed.
\end{enumerate}
We write $\mathcal{T}(\mathbb{X})$ for $\mathcal{T}$ and $\mathcal{B}(\mathbb{X})$ for $\mathcal{B}$ in this case.

We say that $\mathbb{X}$ is a [\emph{topological}] \emph{space with up-directed base} [\emph{down-directed base}] if the poset
$\langle\mathcal{B};\subseteq\rangle$ is up-directed [down-directed];
$\mathbb{X}$ is a \emph{space with} $1$-\emph{base} [$0$-\emph{base}] if
$\langle\mathcal{B};\subseteq\rangle$ has a greatest element [a least element].
Moreover, $\mathbb{X}$ is a \emph{space with multiplicative base} if $\mathcal{B}$ is closed under finite nonempty intersections;
$\mathbb{X}$ is a \emph{space with additive base} if $\mathcal{B}$ is closed under finite nonempty unions.
\end{definition}

\begin{definition}\label{D:pf-st}
A topological space with base $\mathbb{X}$ is called an \emph{almost semispectral space with base}, if $\langle\mathbb{X},\mathcal{T}(\mathbb{X})\rangle$ is an almost sober space,
and $\mathcal{B}(\mathbb{X})$ consists of open compact sets.
\end{definition}

\begin{definition}\label{D:AS}
Let $\mathbf{AS}$ be the category whose objects are almost semispectral spaces with base and whose morphisms are \emph{spectral} mappings; that is,
mappings which satisfy the following condition:
\begin{enumerate}
\item[]
if $f\colon\mathbb{X}\to\mathbb{Y}$, where $\mathbb{X},\mathbb{Y}$ are almost semispectral spaces with base, then
$f^{-1}(U)\in\mathcal{B}(\mathbb{X})$ for each $U\in\mathcal{B}(\mathbb{Y})$.
\end{enumerate}
\end{definition}

\begin{lemma}\label{01}\cite[Lemma 3]{S2}
Let $\mathbb{X}$ be an almost sober topological space with base.
\begin{enumerate}
\item[(i)]
$\mathbb{X}$ is a space with $0$-base if and only if $\varnothing\in\mathcal{B}(\mathbb{X})$.
In particular, $\mathbb{X}$ is a space with $0$-base if and only if $\mathbb{X}$ is a sober space with down-directed base.
\item[(ii)]
$\mathbb{X}$ is a space with $1$-base if and only if $X\in\mathcal{B}(\mathbb{X})$.
In particular, $\mathbb{X}$ is a space with $1$-base if and only if $\mathbb{X}$ is a compact space with up-directed base.
\end{enumerate}
\end{lemma}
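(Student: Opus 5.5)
We argue the two parts separately, using only the definition of a space with base (Definition~\ref{D:B}) and almost sobriety. In each part the "in particular" clause follows from the main equivalence together with clauses (ii) and (iii) of Definition~\ref{D:B}. Indeed, $\varnothing\in\mathcal{B}(\mathbb{X})$ holds exactly when $\mathbb{X}$ is sober with down-directed base. Likewise, $X\in\mathcal{B}(\mathbb{X})$ holds exactly when $\mathbb{X}$ is compact with up-directed base. So the real work is the equivalences ``$0$-base $\iff\varnothing\in\mathcal{B}$'' and ``$1$-base $\iff X\in\mathcal{B}$''.

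For (ii), the backward direction is immediate: if $X\in\mathcal{B}$, then $X$ is the greatest element of $\langle\mathcal{B};\subseteq\rangle$. For the forward direction, let $U$ be the greatest basic set. Since $\mathcal{B}$ is a basis, every point of $X$ lies in some basic set. Every basic set is contained in $U$, so $X=\bigcup\mathcal{B}=U\in\mathcal{B}$. No sobriety is needed here.

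For (i), the backward direction is again trivial, since $\varnothing$ is the least element when it belongs to $\mathcal{B}$. For the forward direction, let $V$ be the least element of $\mathcal{B}$, and assume towards a contradiction that $V\neq\varnothing$. Every nonempty open set is a union of basic sets, each containing $V$, so every nonempty open set contains $V$. Now pick $v\in V$. I claim that $v$ lies below every point of $X$ in the specialization order. Indeed, for any $x\in X$, every open neighbourhood of $v$ contains $V$; if $W$ is an open neighbourhood of $x$, then $W\neq\varnothing$, so $W\supseteq V\ni v$. Hence $v\in\operatorname{cl}\{x\}=\dnw x$. By $T_0$, this $v$ is unique, so $V=\{v\}$ and $v$ is the least point of $X$.

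Then the set $F=\{v\}=\dnw v$ is closed: its complement is $X{\setminus}\{v\}$, and any open set meeting the complement is nonempty, hence contains $V=\{v\}$. So the closed sets containing $v$ behave accordingly. The key step, and the main obstacle, is to show that the empty closed set is represented so that $\mathbb{X}$ is sober, and then invoke Definition~\ref{D:B}(ii).

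What I would try is the following. Almost sobriety handles proper nonempty closed sets. The remaining issue is whether $X$ itself (or $\varnothing$) needs a generic point: sobriety for $F=X$ demands $X=\dnw x$ for some $x$, i.e.\ a top point. That need not follow, so instead I would aim for a contradiction directly from Definition~\ref{D:B}(ii) differently. Namely, $\mathcal{B}$ has a least element, so $\langle\mathcal{B};\subseteq\rangle$ is down-directed. Then (ii) says $\varnothing\in\mathcal{B}$ iff $\mathbb{X}$ is sober, and the task reduces to proving sobriety from the existence of a least point $v$ with $V=\{v\}$ open.

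For the closed set $X$, consider $X{\setminus}\{v\}$. If it is empty, $X=\{v\}=\dnw v$ and we are done. Otherwise, show that a nonempty proper closed set must avoid $v$'s neighbourhood condition. Every nonempty closed $F$ contains $v$, since $v\le x$ for $x\in F$ and closed sets are down-closed. Then $F$ proper and almost sobriety give $F=\dnw x$.

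The delicate point is $F=X$. I expect to resolve it by contradiction: if $V=\{v\}$ is open and minimal, then the empty set can be adjoined to $\mathcal{B}$. Comparing this with how \cite{S2} intends clause (ii), one sees that a nonempty least basic set is incompatible with (ii) exactly when sobriety holds. Verifying this case carefully against Definition~\ref{D:B}(ii) is where I expect the main difficulty.
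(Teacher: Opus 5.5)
Your proofs of (ii), of the backward direction of (i), and of the two ``in particular'' clauses are fine. The ``in particular'' clauses are just Definition~\ref{D:B}(ii),(iii) read off. The gap is in the forward direction of (i), and it comes from reversing the specialization order.

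From ``every open neighbourhood $W$ of $x$ is nonempty, hence contains $V\ni v$'' you can conclude only $x\in\operatorname{cl}\{v\}$, not $v\in\operatorname{cl}\{x\}$. So $v$ is the \emph{greatest} point: $\operatorname{cl}\{v\}=X$, that is, $X=\dnw v$. Several of your later claims are therefore false:
\begin{enumerate}
\item[(a)] $\{v\}=V$ is open, and it is closed only when $X=\{v\}$. Your own complement argument actually shows that $X{\setminus}\{v\}$ is \emph{not} open.
\item[(b)] A closed set containing $v$ must be all of $X$, so proper closed sets avoid $v$ rather than contain it. Sierpi\'nski space $\{0,1\}$ with $\{1\}$ open illustrates both (a) and (b).
\end{enumerate}
Most importantly, you assert that a generic point for $F=X$ ``need not follow''. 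You then leave that case as an unresolved ``delicate point'', supported only by a heuristic about adjoining $\varnothing$ to $\mathcal{B}$. As written, the proposal does not prove that a least basic set must be empty.

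The missing step is exactly the fact you misread. Suppose the least element $V$ of $\mathcal{B}$ were nonempty, and pick $v\in V$. Then $X=\dnw v$, so $X$ itself has a generic point. Almost sobriety already handles the proper nonempty closed sets, so $\mathbb{X}$ is sober. Since $\langle\mathcal{B};\subseteq\rangle$ has a least element, it is down-directed. Definition~\ref{D:B}(ii) then gives $\varnothing\in\mathcal{B}$, so $V\subseteq\varnothing$, a contradiction. This also shows why almost sobriety is needed in (i) but not in (ii).
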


\noindent

\section{Spectra of posets}\label{sect:spectra}

Here we give some technical results which are used in the proof of the duality from Theorem~\ref{T:Stone} given below. These results will be also useful for effective versions of Theorem~\ref{T:Stone}.

\begin{definition}\label{D:SpL}\cite{ES1,ES2,Top}
Let $\mathcal{P}=\langle P;\leq,\varphi\rangle$ be a $c$-poset and let
$\operatorname{Spec}\mathcal{P}$ denote the set of all prime ideals of $\mathcal{P}$. For each $a\in P$, we put
\[
V_a=\{I\in\operatorname{Spec}\mathcal{P}\mid a\notin I\}.
\]
The space $\operatorname{\mathbb{S}pec}\mathcal{P}=\langle\operatorname{Spec}\mathcal{P},\mathcal{T},\mathcal{B}\rangle$, where
$\mathcal{T}$ denotes the topology with the (sub)basis $\mathcal{B}=\{V_a\mid a\in P\}$, is called the \emph{spectrum of} $\mathcal{P}$.

The space $\operatorname{\mathbb{S}pec}\mathcal{S}$ is called the
\emph{spectrum of a join-semilattice} $\langle S;\vee\rangle$, where $\mathcal{S}=\langle S;\vee,\psi\rangle$ and $\psi(X)$ denotes the join-semilattice ideal
generated by $X\subseteq S$; that is,
\[
\psi(X)=\{s\in S\mid s\leq a_0\vee\ldots\vee a_n\ \text{for some}\ n<\omega\ \text{and some}\ a_0,\ldots,a_n\in X\}.
\]
The \emph{spectrum of a lattice} $\langle L;\vee,\wedge\rangle$ is the spectrum of its join-semilattice reduct $\langle L;\vee\rangle$.
\end{definition}

\noindent
Let $\mathcal{P}=\langle P;\leq,\varphi\rangle$ be a $c$-poset. For a set $X\subseteq P$, we put
\[
V_X=\{I\in\operatorname{Spec}\mathcal{P}\mid X\nsubseteq I\}.
\]
It is clear that $V_X=\bigcup_{a\in X}V_a$.

\begin{lemma}\label{L:phi}
For a $\varphi$-distributive $c$-poset $\mathcal{P}=\langle P;\leq,\varphi\rangle$, the following statements hold.
\begin{enumerate}
\item[(i)]
$V_X=V_{\varphi(X)}$ for each set $X\subseteq P$.
\item[(ii)]
$V_a\subseteq V_X$ if and only if $a\in\varphi(X)$ for each element $a\in P$ and each set $X\subseteq P$.
\item[(iii)]
$V_a\subseteq V_b$ if and only if $a\leq b$ in $\mathcal{P}$ for all $a,b\in P$.
\item[(iv)]
$V_a\cap V_b=V_c$ if and only if $a\wedge b=c$ in $\mathcal{P}$ for all $a,b,c\in P$.
\item[(v)]
Suppose that $a\vee b\in\varphi(a,b)$ whenever $a\vee b$ exists.
Then $V_a\cup V_b=V_c$ if and only if $a\vee b=c$ in $\mathcal{P}$ for all $a,b,c\in P$.
\end{enumerate}
\end{lemma}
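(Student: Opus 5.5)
The plan is to derive everything from (ii); (ii) in turn rests on the prime ideal separation theorem, Theorem~\ref{BS}. I read ``$\varphi$-distributive'' as ``distributive'' in the sense of Definition~\ref{DP}, so that Theorem~\ref{BS} is available.

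For (i), every prime ideal $I$ is $\varphi$-closed. Hence $X\subseteq I$ if and only if $\varphi(X)\subseteq I$. Passing to complements inside $\operatorname{Spec}\mathcal{P}$ gives $V_X=V_{\varphi(X)}$.

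For (ii), suppose first that $a\in\varphi(X)$. Then $V_a\subseteq V_{\varphi(X)}=V_X$ by (i). For the converse, suppose $a\notin\varphi(X)$; I will produce $I\in V_a\setminus V_X$.
\begin{itemize}
\item If $\varphi(X)\neq\varnothing$, take the nonempty ideal $\varphi(X)$ and the nonempty down-directed set $F=\{p\in P\mid a\le p\}$, the principal filter of $a$. These are disjoint: if $p\ge a$ and $p\in\varphi(X)$, then $a\in\varphi(p)\subseteq\varphi(X)$, a contradiction. Theorem~\ref{BS} yields a prime $Q\supseteq\varphi(X)$ with $a\notin Q$. So $Q\in V_a$, while $X\subseteq Q$ gives $Q\notin V_X$.
\item If $\varphi(X)=\varnothing$, then $X=\varnothing$ and $V_X=\varnothing$, so I must show $V_a\neq\varnothing$. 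When some $b\in P$ satisfies $a\not\le b$, apply Theorem~\ref{BS} to the ideal $\varphi(b)$ and the filter $\uparrow a$. When every $b$ satisfies $a\le b$, $a$ is the least element of $P$. Then $\varphi(a)=\{a\}$ is a nonempty ideal, and it fails to separate from $\uparrow a=P$.
\end{itemize}
The step I expect to be the main obstacle is exactly this last degenerate case of (ii), where $X=\varnothing$ and $a$ is the least element. Either the intended reading excludes it, or one notes that the statement should be read for nonempty $X$. I would handle it by remarking that in this case both sides of (ii) must be checked directly from the conventions on $\varphi(\varnothing)=\varnothing$ and on prime ideals being proper.

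Item (iii) is the special case $X=\{b\}$ of (ii): $a\in\varphi(b)$ if and only if $\varphi(a)\subseteq\varphi(b)$, that is, $a\le b$.

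For (iv), assume $V_a\cap V_b=V_c$. Then $V_c\subseteq V_a$ and $V_c\subseteq V_b$, so $c\le a,b$ by (iii). Now let $d\le a,b$. Then $V_d\subseteq V_a\cap V_b=V_c$, so $d\le c$; hence $c=a\wedge b$. Conversely, suppose $c=a\wedge b$. By (iii), $V_c\subseteq V_a\cap V_b$. For the reverse inclusion, take $I\in V_a\cap V_b$. Since $P\setminus I$ is a filter (Lemma~\ref{L:BS}), it is down-directed, so it contains some $d\le a,b$. Then $d\le c$, so $c\notin I$ because $I$ is a lower set.

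For (v), suppose $c=a\vee b$ exists. By hypothesis $c\in\varphi(a,b)$, so (ii) gives $V_c\subseteq V_{\{a,b\}}=V_a\cup V_b$. The reverse inclusion follows from (iii), since $a,b\le c$. Conversely, assume $V_a\cup V_b=V_c$. Then $a,b\le c$ by (iii). If $d\ge a,b$, then $V_c=V_a\cup V_b\subseteq V_d$, so $c\le d$; hence $c=a\vee b$.
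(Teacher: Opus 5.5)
Your proof is correct and is essentially the paper's proof: (ii) via Theorem~\ref{BS} applied to $\varphi(X)$ and $\upw a$, (iii) as the case $X=\{b\}$, (iv) via the filter property of $P\setminus I$, and (v) via the hypothesis $a\vee b\in\varphi(a,b)$. The only differences are that the paper cites (i) instead of proving it, and in (v) argues elementwise with a prime ideal rather than invoking (ii) for $X=\{a,b\}$.

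Your concern about $X=\varnothing$ is justified, and you should state it outright rather than defer to ``conventions''. If $a$ is the least element of $P$, every prime ideal is nonempty and hence contains $a$, so $V_a=\varnothing=V_\varnothing$ while $a\notin\varphi(\varnothing)=\varnothing$. Thus (ii) is false for $X=\varnothing$ and must be restricted to nonempty $X$. The paper's proof has the same blind spot, since it applies Theorem~\ref{BS} to the possibly empty ideal $\varphi(X)$. This is harmless, because (iii), (v) and all later applications (where $D_k\neq\varnothing$) use only nonempty $X$.
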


\begin{proof}
Statement (i) is the content of \cite[Corollary 6(iii)]{ES1}.

(ii) If $a\in\varphi(X)$, then $V_a\subseteq V_{\varphi(X)}$ by (i).
Suppose that $a\notin\varphi(X)$ for some $a\in P$ and some $X\subseteq P$. In this case, $\varphi(X)\cap\upw a=\varnothing$.
Applying Theorem \ref{BS}, we obtain a $\varphi$-ideal $I\in\operatorname{Spec}\mathcal{P}$ such that $\varphi(X)\subseteq I$ and $a\notin I$.
This implies in view of (i) that $I\in V_a{\setminus}V_X$. Thus, $V_a\nsubseteq V_X$ and (ii) follows.

Statement (iii) follows from (ii).

(iv) Suppose first that $a\wedge b$ exists. It follows from (iii) that $V_{a\wedge b}\subseteq V_a\cap V_b$.
Conversely, let $I\in V_a\cap V_b$; then $a,b\notin I$. This means by Lemma \ref{L:BS} that $a\wedge b\notin I$ and $I\in V_{a\wedge b}$.
Hence, $V_{a\wedge b}=V_a\cap V_b$. Suppose now that $V_a\cap V_b=V_c$; then $c\leq a,b$ by (iii). If $d\leq a,b$ for some $d\in P$ then
we obtain by (iii) that $V_d\subseteq V_a\cap V_b=V_c$ whence $d\leq c$. This proves that $c$ is a greatest lower bound for $a,b$; that is, $c=a\wedge b$.

(v) As in the proof of (iv), we suppose first that $a\vee b$ exists. It follows from (iii) that $V_a\cup V_b\subseteq V_{a\vee b}$.
Conversely, let $I\in V_{a\vee b}$; then $a\vee b\notin I$. By our assumption about $\varphi$, this implies that $a\notin I$ or $b\notin I$
whence $I\in V_a\cup V_b$ and $V_{a\vee b}=V_a\cup V_b$.
We obtain by (iii) that $V_c=V_a\cup V_b\subseteq V_d$ whence $c\leq d$.
This proves that $c$ is a least upper bound for $a,b$; that is, $c=a\vee b$.
\end{proof}

\noindent
Let $\mathbb{X}$ be a topological $T_0$-space and let $\mathcal{F}\subseteq\mathcal{T}(\mathbb{X})$ be a family of open sets.
Define a closure operator $\varphi_\mathcal{F}$ on $\mathcal{F}$ as follows. If $\mathcal{X}\subseteq\mathcal{F}$ then we put
\[
\varphi_\mathcal{F}(\mathcal{X})=
\begin{cases}
&\varnothing,\quad\text{if}\ \mathcal{X}=\varnothing;\\
&\bigl\{U\in\mathcal{F}\mid U\subseteq\bigcup\mathcal{X}\bigr\},\quad\text{if}\ \mathcal{X}\ne\varnothing.
\end{cases}
\]
It is straightforward to check that $\varphi_\mathcal{F}$ is an algebraic closure operator whenever $\mathcal{F}\subseteq\mathcal{T}(\mathbb{X})$
consists of compact open sets and that
$\langle\mathcal{F};\subseteq,\varphi_\mathcal{F}\rangle$
is a distributive $c$-poset in this case.

\begin{lemma}\label{L:ds}
Let $\mathcal{P}_0=\langle P_0;\leq,\varphi_0\rangle$ and $\mathcal{P}_0=\langle P_1;\leq,\varphi_1\rangle$
be distributive $c$-posets and let $\xi\colon\mathcal{P}_0\to\mathcal{P}_1$ be a $\mathbf{DP}$-isomorphism.
If $\varphi_1(A)=\dnw A$ for all $A\subseteq P_1$ then $\varphi_0(A)=\dnw A$ for all $A\subseteq P_0$.
\end{lemma}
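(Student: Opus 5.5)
The plan is to use only the two defining properties of the isomorphism $\xi$ from Definition~\ref{D:Pm}: it is a bijective order-embedding, so $a\leq b$ iff $\xi(a)\leq\xi(b)$, and it satisfies $\xi\bigl(\varphi_0(X)\bigr)=\varphi_1\bigl(\xi(X)\bigr)$ for every $X\subseteq P_0$. (Here $\xi$ is a surjective order-embedding, hence injective and therefore a bijection.) The idea is to transport the identity $\varphi_1=\dnw$ back along $\xi$.

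Fix $A\subseteq P_0$. By the second property and the hypothesis on $\varphi_1$,
\[
\xi\bigl(\varphi_0(A)\bigr)=\varphi_1\bigl(\xi(A)\bigr)=\dnw\xi(A).
\]
Next I check that $\dnw\xi(A)=\xi(\dnw A)$. Take $q\in P_1$ with $q\leq\xi(a)$ for some $a\in A$. By surjectivity, $q=\xi(p)$ for some $p\in P_0$. Since $\xi$ reflects the order, $p\leq a$, so $q\in\xi(\dnw A)$. The reverse inclusion holds because $\xi$ preserves the order.

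It follows that $\xi\bigl(\varphi_0(A)\bigr)=\xi(\dnw A)$. Since $\xi$ is injective, $\varphi_0(A)=\dnw A$.

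There is no real obstacle. The only point needing care is that injectivity of $\xi$ is derived from order-reflection together with antisymmetry of $\leq$, rather than assumed. The argument works equally well for $A=\varnothing$, since both sides are then empty.
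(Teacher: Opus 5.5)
Your argument is correct for the hypothesis you chose, but that hypothesis is stronger than what the lemma gives you. You read ``$\mathbf{DP}$-isomorphism'' as the ``isomorphism of $c$-posets'' from the second half of Definition~\ref{D:Pm}. That notion has order-reflection and the identity $\xi(\varphi_0(X))=\varphi_1(\xi(X))$ built in, and with them the lemma is indeed a one-line transport. In the lemma, however, $\xi$ is an isomorphism in the category $\mathbf{DP}$, whose morphisms are the strict maps. All you know is:
\begin{itemize}
\item $\xi$ is a bijection;
\item $\xi^{-1}(J)\in\operatorname{Spec}\mathcal{P}_0$ for every $J\in\operatorname{Spec}\mathcal{P}_1$;
\item the inverse map is strict, i.e.\ $\xi(Q)\in\operatorname{Spec}\mathcal{P}_1$ for every $Q\in\operatorname{Spec}\mathcal{P}_0$.
\end{itemize}
Order-reflection and commutation with the closure operators are exactly what must be proved. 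The paper cites \cite[Lemma~1.8(i)]{S1} to get that $\xi$ is an order-isomorphism, and most of its proof derives $\varphi_1\xi(A)=\xi\varphi_0(A)$ from strictness via Theorem~\ref{BS}. This is also how the lemma is used: in Corollary~\ref{cor:comp-dim} the isomorphism between $\mathsf{P}(\mathbb{X}^\prime)$ and $\mathcal{P}$ comes ``by duality'', i.e.\ as a categorical $\mathbf{DP}$-isomorphism about which nothing more is known a priori. So the first equality in your display assumes the substance of the lemma, and your step $\dnw\xi(A)=\xi(\dnw A)$ uses order-reflection, which is likewise not given.

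The missing idea is separation by prime ideals (Theorem~\ref{BS}) combined with strictness. First you get order-reflection. In a distributive $c$-poset, $a\leq b$ iff $a\in\varphi(b)$ iff every prime ideal containing $b$ contains $a$; apply Theorem~\ref{BS} to the ideal $\varphi(b)$ and the set $\upw a$. Since $Q\mapsto\xi(Q)$ is a bijection $\operatorname{Spec}\mathcal{P}_0\to\operatorname{Spec}\mathcal{P}_1$ by strictness of $\xi$ and of its inverse, this criterion transfers along $\xi$, so $\xi$ is an order-isomorphism. Next, suppose $\varnothing\neq A\subseteq P_0$ and $c\in\varphi_0(A)\setminus\dnw A$. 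By order-reflection, $\xi(c)\notin\dnw\xi(A)=\varphi_1(\xi(A))$. Theorem~\ref{BS} then yields a prime ideal $J\supseteq\varphi_1(\xi(A))$ with $\xi(c)\notin J$. But $\xi^{-1}(J)$ is a prime ideal of $\mathcal{P}_0$ containing $A$, hence containing $\varphi_0(A)$, hence containing $c$, which is a contradiction. Together with $\dnw A\subseteq\varphi_0(A)$ (ideals are down-sets) and $\varphi_0(\varnothing)=\varnothing$, this proves the lemma. It is essentially the paper's argument, phrased slightly more directly.
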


\begin{proof}
It follows by \cite[Lemma 1.8(i)]{S1} that $\xi\colon\langle P_0;\leq\rangle\to\langle P_1;\leq\rangle$ is an isomorphism of posets.

Let $A\subseteq P_0$, let $B=\xi\varphi_0(A)$, and let $c\leq b$ for some $b\in B$.
Then $\xi^{-1}(c)\leq\xi^{-1}(b)\in\varphi_0(A)$ whence $\xi^{-1}(c)\in\varphi_0(A)$ as $\varphi_0(A)$ is a $\varphi_0$-ideal.
This implies that $c=\xi\xi^{-1}(c)\in\xi\varphi_0(A)=B$ and $B=\dnw B=\varphi_1(B)$.
As $A\subseteq\varphi_0(A)$, we conclude that $\xi(A)\subseteq B$ whence $\varphi_1\xi(A)\subseteq\varphi_1(B)=B$.
Conversely, suppose that there exists $b\in B$ such that $b\notin\varphi_1\xi(A)$. Since $\varphi_1\xi(A)$ is a $\varphi_1$-ideal,
we conclude by Theorem \ref{BS} that there is a prime $\varphi_1$-ideal $I$ such that $\xi(A)\subseteq\varphi_1\xi(A)\subseteq I$ and $b\notin I$.
Thus, $A\subseteq\xi^{-1}(I)$. Moreover, $\xi^{-1}(I)$ is a prime $\varphi_0$-ideal as $\xi$ is a $\mathbf{DP}$-morphism.
Hence, $\varphi_0(A)\subseteq\xi^{-1}(I)$ and $b\in B=\xi\varphi_0(A)\subseteq\xi\xi^{-1}(I)=I$ which is a contradiction.
This contradiction proves the inclusion $B\subseteq\varphi_1\xi(A)$ and therefore the equality $\varphi_1\xi(A)=\xi\varphi_0(A)$.

Now, let $b\in\varphi_0(A)$; then $\xi(b)\in\xi\varphi_0(A)=\varphi_1\xi(A)$. By our assumption, there is $a\in A$ such that $\xi(b)\leq\xi(a)$.
Since $\xi$ is an isomorphism of posets, we conclude that $b\leq a\in A$ and $\varphi_0(A)\subseteq\dnw A\subseteq\varphi_0(A)$.
Thus, $\varphi_0(A)=\dnw A$ for all $A\subseteq P_0$.
\end{proof}

\noindent
For an almost sober space with base $\mathbb{X}$, we put
$\mathfrak{B}_\mathbb{X}=\langle\mathcal{B}(\mathbb{X});\subseteq,\varphi_{\mathcal{B}(\mathbb{X})}\rangle$.

\begin{theorem}\label{T:TP}
\cite[Theorem 3.4]{S1}
Let $\mathbb{X}=\langle X,\mathcal{T}(\mathbb{X}),\mathcal{B}\rangle$ be an almost sober space.
Then the mapping
\[
f_\mathbb{X}\colon\mathbb{X}\to\operatorname{\mathbb{S}pec}\mathfrak{B}_\mathbb{X},\quad
f_\mathbb{X}\colon x\mapsto\{V\in\mathcal{B}\mid x\notin V\}
\]
is a homeomorphism. Moreover, $f_\mathbb{X}^{-1}(V_A)=A$ for all $A\in\mathcal{B}$
whence $f_\mathbb{X}$ is a homeomorphism of topological spaces with base.
\end{theorem}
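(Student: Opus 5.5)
We must show three things about $f_\mathbb{X}$: it is well defined (each $f_\mathbb{X}(x)$ is a prime ideal of $\mathfrak{B}_\mathbb{X}$), it is a bijection onto $\operatorname{Spec}\mathfrak{B}_\mathbb{X}$, and it matches basic open sets via $f_\mathbb{X}^{-1}(V_A)=A$. Homeomorphism then follows at once, since $\mathcal{B}$ is a base of $\mathbb{X}$ and $\{V_A\mid A\in\mathcal{B}\}$ is the (sub)base of the spectrum; a bijection pulling one base back onto the other is a homeomorphism of spaces with base. Throughout we use that the members of $\mathcal{B}$ are compact open, as in the almost semispectral setting, so that $\varphi=\varphi_\mathcal{B}$ is an algebraic closure operator and $\mathfrak{B}_\mathbb{X}$ is a distributive $c$-poset.

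\emph{Well-definedness.} Fix $x$ and put $I_x=\{V\in\mathcal{B}\mid x\notin V\}$.
\begin{itemize}
\item $I_x$ is $\varphi$-closed: if $\mathcal{X}\subseteq I_x$ is nonempty and $U\subseteq\bigcup\mathcal{X}$, then $x\notin U$.
\item $I_x\neq\mathcal{B}$, since some basic set contains $x$ (the base covers $X$).
\item $I_x\neq\varnothing$ needs care. If $I_x=\varnothing$, then every basic set contains $x$, so $x$ lies in every nonempty open set. By $T_0$, $x$ is then the least element of $X$ and $\dnw x=\{x\}$. We treat this degenerate case using conditions (ii)--(iii) of Definition~\ref{D:B} and Lemma~\ref{01}. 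If $\varnothing\in\mathcal{B}$, then $\varnothing\in I_x$. Otherwise the base is not down-directed or the space is not sober, and we must check that this is still consistent with the paper's convention on proper ideals. I expect this edge case (empty or full ideals and the $0$/$1$-base conditions) to be the main bookkeeping obstacle.
\item Primeness: take condition (1) of Lemma~\ref{L:BS}. The complement $\{V\in\mathcal{B}\mid x\in V\}$ is an upper cone. It is down-directed because $\mathcal{B}$ is a base: for $x\in V\cap W$ there is $U\in\mathcal{B}$ with $x\in U\subseteq V\cap W$.
\end{itemize}

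\emph{Injectivity and the identity $f_\mathbb{X}^{-1}(V_A)=A$.} By definition, $f_\mathbb{X}(x)\in V_A$ iff $A\notin I_x$ iff $x\in A$, which gives $f_\mathbb{X}^{-1}(V_A)=A$ directly. If $f_\mathbb{X}(x)=f_\mathbb{X}(y)$, then $x$ and $y$ lie in the same basic sets, hence in the same open sets, so $x=y$ by $T_0$.

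\emph{Surjectivity (the main step).} Let $I$ be a prime ideal and let $F=\mathcal{B}\setminus I$, a filter. Consider the closed set
\[
C=X\setminus\bigcup I .
\]
We show $C$ is nonempty and proper, and then use almost sobriety to write $C=\dnw x$.

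First, $C\neq\varnothing$. Pick $V\in F$. If $V\subseteq\bigcup I$, then by compactness of $V$ we get $V\subseteq\bigcup\mathcal{X}$ for a finite $\mathcal{X}\subseteq I$, hence $V\in\varphi(\mathcal{X})\subseteq I$, a contradiction. Second, $C\ne X$ because $I\ne\varnothing$ and we may assume $I$ contains a nonempty set; the case $I=\{\varnothing\}$ is handled separately as in the edge case above.

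By almost sobriety, $C=\dnw x$ for some $x$. It remains to check $I_x=I$.
\begin{itemize}
\item If $V\in I$, then $V\subseteq\bigcup I$, so $V\cap C=\varnothing$ and $x\notin V$; thus $V\in I_x$.
\item Conversely, let $x\notin V$ with $V\in\mathcal{B}$, and suppose for contradiction that $V\in F$. For each $W\in F$, the set $V\cap W$ is open and meets $C$: otherwise $V\cap W\subseteq\bigcup I$, and compactness plus the filter property would give some $U\in F$ with $U\subseteq V\cap W$, $U$ compact and covered by $I$, forcing $U\in I$. So $V\cap W\cap C\neq\varnothing$. But every point of $C$ lies in $\dnw x$, that is, below $x$ in the specialization order, so every open set meeting $C$ contains $x$. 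In particular $x\in V$, a contradiction. Hence $V\in I$.
\end{itemize}
Therefore $I=f_\mathbb{X}(x)$, and $f_\mathbb{X}$ is surjective.
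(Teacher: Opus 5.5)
Your main line is correct: $I_x$ is $\varphi$-closed and its complement is the neighbourhood filter of $x$ in $\mathcal{B}$; injectivity follows from $T_0$; $f_\mathbb{X}^{-1}(V_A)=A$ is immediate; and surjectivity via $C=X\setminus\bigcup I$, compactness of basic sets and almost sobriety is the natural argument. The paper itself only cites \cite[Theorem 3.4]{S1}, so there is no in-paper proof to compare with.

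The gap is the two degenerate cases you leave open. One is $I_x=\varnothing$. The other is the prime ideal $I=\{\varnothing\}$ when $\varnothing\in\mathcal{B}$, where $C=X$ and almost sobriety says nothing. These are not bookkeeping: they are exactly where condition (ii) of Definition~\ref{D:B} enters, and without (ii) the statement is false. Take $X=\{x\}$ and $\mathcal{B}=\{X\}$; this satisfies every requirement except (ii). Then $I_x=\varnothing$ is not a proper ideal, and $\operatorname{Spec}\mathfrak{B}_\mathbb{X}=\varnothing$. Your argument never invokes (ii), so it shows neither that $f_\mathbb{X}$ lands in $\operatorname{Spec}\mathfrak{B}_\mathbb{X}$ nor that $\{\varnothing\}$ lies in its image. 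Your sketch of the first case is also backwards. A point lying in every basic set lies in every nonempty open set, so its closure is all of $X$. It is the \emph{greatest} element of the specialization order, and $\dnw x=X$, not $\{x\}$.

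Both cases close quickly once you use (ii).
\begin{itemize}
\item Suppose $I_x=\varnothing$. Then $x$ lies in every member of $\mathcal{B}$, so $\varnothing\notin\mathcal{B}$. Also $\mathcal{B}$ is down-directed: given $V,W\in\mathcal{B}$, pick $U\in\mathcal{B}$ with $x\in U\subseteq V\cap W$. Moreover $X=\dnw x$, which together with almost sobriety makes $\mathbb{X}$ sober. Now (ii) forces $\varnothing\in\mathcal{B}$, a contradiction.
\item Suppose $I=\{\varnothing\}$ is prime. Then $\varnothing\in\mathcal{B}$, so $\mathbb{X}$ is sober by (ii). Since $F=\mathcal{B}\setminus\{\varnothing\}$ is nonempty and down-directed, $X$ is nonempty and any two nonempty open sets meet. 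Hence $X=\dnw x$ for some $x$. Such an $x$ lies in every nonempty open set, so $I_x=\{\varnothing\}=I$.
\end{itemize}

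Finally, the paper's wording of (almost) sobriety omits ``irreducible''. Under the standard reading you must check that $C$ is irreducible before writing $C=\dnw x$, and your own compactness argument supplies this. A basic set meeting $C$ is not in $I$, so two such sets contain some $U\in F$ inside their intersection. That $U$ meets $C$, because a compact $U\subseteq\bigcup I$ would belong to $I$. The same reasoning shows directly that any $V\in F$ meets $C=\dnw x$ and hence contains $x$, so the detour through $V\cap W$ in your last step is unnecessary.
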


\medskip

\subsection{Functors $\mathsf{P}$ and $\mathsf{T}$}
{\ }

We consider the following two functors.
\begin{align*}
&\mathsf{P}\colon\mathbf{AS}\to\mathbf{DP};\\
&\mathsf{P}\colon\mathbb{X}\mapsto\langle\mathcal{B}(\mathbb{X});\subseteq,\varphi_\mathcal{B}\rangle;\\
&\text{if}\ f\colon\mathbb{X}_0\to\mathbb{X}_1\ \text{is spectral then}\ 
\mathsf{P}(f)\colon U\mapsto f^{-1}(U).
\end{align*}

\begin{align*}
&\mathsf{T}\colon\mathbf{DP}\to\mathbf{AS};\\
&\mathsf{T}\colon\mathcal{P}\mapsto\operatorname{\mathbb{S}pec}\mathcal{P};\\
&\text{if}\ f\colon\mathcal{P}_0\to\mathcal{P}_1\ \text{is a}\ \mathbf{DP}\text{-morphism then}\ 
\mathsf{T}(f)\colon I\mapsto f^{-1}(I).
\end{align*}

\begin{theorem}\label{T:Stone}
\cite[Theorem 5.5]{S1}
Functors $\mathsf{P}$ and $\mathsf{T}$ establish the dual equivalence of the categories $\mathbf{AS}$ and $\mathbf{DP}$.
\end{theorem}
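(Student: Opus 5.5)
To establish Theorem~\ref{T:Stone}, I will show that $\mathsf{P}$ and $\mathsf{T}$ are well-defined contravariant functors. Then I will exhibit natural isomorphisms $\mathrm{id}_{\mathbf{AS}}\cong\mathsf{T}\mathsf{P}$ and $\mathrm{id}_{\mathbf{DP}}\cong\mathsf{P}\mathsf{T}$.

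\textbf{Step 1: $\mathsf{P}$ is a functor into $\mathbf{DP}$.}
For $\mathbb{X}$ in $\mathbf{AS}$, the base $\mathcal{B}(\mathbb{X})$ consists of compact open sets. By the remark preceding Lemma~\ref{L:ds}, $\mathsf{P}(\mathbb{X})=\mathfrak{B}_\mathbb{X}$ is then a distributive $c$-poset.

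For a spectral $f\colon\mathbb{X}_0\to\mathbb{X}_1$, the map $U\mapsto f^{-1}(U)$ sends $\mathcal{B}(\mathbb{X}_1)$ into $\mathcal{B}(\mathbb{X}_0)$. It remains to check strictness. By Theorem~\ref{T:TP}, every prime ideal of $\mathfrak{B}_{\mathbb{X}_0}$ has the form $\{V\mid x\notin V\}$ for some point $x$. Its preimage under $\mathsf{P}(f)$ is $\{U\mid f(x)\notin U\}$, which is again of this form, hence prime. Functoriality is immediate from $(gf)^{-1}=f^{-1}g^{-1}$.

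\textbf{Step 2: $\mathsf{T}$ is a functor into $\mathbf{AS}$.}
For $\mathcal{P}$ in $\mathbf{DP}$, I must verify the following.

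\begin{itemize}
\item[(a)] $\operatorname{\mathbb{S}pec}\mathcal{P}$ is $T_0$. This is clear, since distinct prime ideals are separated by some $V_a$.
\item[(b)] $\{V_a\}$ is a base and not merely a subbase. Because $\mathcal{P}\setminus I$ is a filter, for each $I\in V_a\cap V_b$ there is $c\le a,b$ with $c\notin I$. By Lemma~\ref{L:phi}(iii), $I\in V_c\subseteq V_a\cap V_b$.
\item[(c)] Each $V_a$ is compact. Suppose $V_a\subseteq\bigcup_{x\in X}V_x=V_X$. By Lemma~\ref{L:phi}(ii), $a\in\varphi(X)$, and algebraicity gives a finite $F\subseteq X$ with $a\in\varphi(F)$, so $V_a\subseteq V_F$. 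Since $\operatorname{Spec}\mathcal{P}$ is the union of basic sets, the same computation handles compactness of the whole space when it is needed.
\item[(d)] $\operatorname{\mathbb{S}pec}\mathcal{P}$ is almost sober. Take a proper nonempty closed set $C$, and let $J$ be the ideal generated by $\{a\mid V_a\cap C=\varnothing\}$. I will show $J$ is prime using Lemma~\ref{L:BS}(3) together with Theorem~\ref{BS}, and then that $C$ is the closure of the point $J$.
\item[(e)] The conditions (ii)--(iii) of Definition~\ref{D:B} hold. These say that $\varnothing$, respectively $\operatorname{Spec}\mathcal{P}$, equals some $V_a$ exactly under the stated sobriety, compactness and directedness conditions. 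I will check them by translating via Lemma~\ref{L:phi} into the existence of a least or greatest element of $P$, and then invoke Lemma~\ref{01}.
\end{itemize}

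For a $\mathbf{DP}$-morphism $f$, the preimage $\mathsf{T}(f)^{-1}(V_a)$ equals $V_{f(a)}$, so $\mathsf{T}(f)$ is spectral.

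\textbf{Step 3: the two natural isomorphisms.}
On the $\mathbf{AS}$ side, Theorem~\ref{T:TP} already supplies the isomorphism $f_\mathbb{X}\colon\mathbb{X}\to\mathsf{T}\mathsf{P}(\mathbb{X})$ in $\mathbf{AS}$. Naturality is checked pointwise: $f_{\mathbb{X}_1}(g(x))=\mathsf{T}\mathsf{P}(g)(f_{\mathbb{X}_0}(x))$, since both sides equal $\{U\mid g(x)\notin U\}$.

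On the $\mathbf{DP}$ side, define $\eta_\mathcal{P}\colon a\mapsto V_a$. This map is an order-embedding by Lemma~\ref{L:phi}(iii), and it is surjective onto $\mathcal{B}$ by construction. By Lemma~\ref{L:phi}(i)--(ii), $\eta(\varphi(X))=\varphi_\mathcal{B}(\eta(X))$, with the empty case handled separately. Hence $\eta_\mathcal{P}$ is an isomorphism of $c$-posets. Its inverse is strict because prime ideals correspond under $\eta$. Naturality amounts to $V_{f(a)}=\mathsf{T}(f)^{-1}(V_a)$, which is exactly the computation at the end of Step 2.

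\textbf{Main obstacle.}
I expect the delicate points to be the almost sobriety of the spectrum in Step 2(d) and the bookkeeping of the $\varnothing$ and $X$ clauses in Definition~\ref{D:B}. The first requires producing a genuinely prime generic point via Theorem~\ref{BS}. For the second, I would try to reduce everything to Lemma~\ref{01} as the first approach.
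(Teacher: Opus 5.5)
Your architecture (functoriality, Theorem~\ref{T:TP} as the unit on the $\mathbf{AS}$ side, $a\mapsto V_a$ on the $\mathbf{DP}$ side, and $\mathsf{T}(f)^{-1}(V_a)=V_{f(a)}$) is the right one, and Steps~1, 2(a)--(c) and~3 are fine. The real gap is in Step~2(d): your plan cannot show that $J$ is prime. For an arbitrary proper nonempty closed $C$, the set $J=\{a\mid V_a\cap C=\varnothing\}$ need not be prime (it is already a $\varphi$-ideal, by Lemma~\ref{L:phi}(ii)). Take the eight-element Boolean lattice $2^3$ with its lattice ideals. Its spectrum is the three-point discrete space $\{I_1,I_2,I_3\}$, where $I_k=\{a\subseteq\{1,2,3\}\mid k\notin a\}$. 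For $C=\{I_1,I_2\}$ you get $J=\{\varnothing,\{3\}\}$. The complement of $J$ contains $\{1\}$ and $\{2\}$ but no common lower bound of them, so $J$ is not prime. Read literally, the definition of ``almost sober'' in Section~\ref{sect:prelim} would even give $\mathsf{T}(2^3)\notin\mathbf{AS}$. It has to be read for \emph{irreducible} closed sets, and irreducibility is exactly the ingredient your primality argument lacks; Theorem~\ref{BS} does not supply it. With irreducibility the step goes through. If $a,b\notin J$, then $C$ meets $V_a$ and $V_b$, hence meets $V_a\cap V_b$ in some $I$. Since $P\setminus I$ is down-directed, some $c\le a,b$ lies outside $I$, so $V_c\cap C\neq\varnothing$ and $c\notin J$. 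Finally $C=\overline{\{J\}}$, because $I\in\overline{\{J\}}$ iff $J\subseteq I$ iff $I\notin V_J=\operatorname{Spec}\mathcal{P}\setminus C$.

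A second, smaller gap is Step~2(e). Lemma~\ref{01} assumes $\mathbb{X}$ is already an almost sober space \emph{with base}, and its ``in particular'' clauses are just conditions (ii)--(iii) of Definition~\ref{D:B}. Invoking it to prove those conditions is therefore circular, and you must argue directly.
\begin{itemize}
\item By Lemma~\ref{L:phi}(iii), and since prime ideals are nonempty proper down-sets, $V_a=\varnothing$ iff $a$ is least, and $V_a=\operatorname{Spec}\mathcal{P}$ iff $a$ is greatest.
\item For (iii): if the spectrum is compact and $\mathcal{P}$ is up-directed, take a finite subcover $V_{a_1},\dots,V_{a_n}$ and an upper bound $b$ of the $a_i$; then $V_b=\operatorname{Spec}\mathcal{P}$. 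The converse follows from (c).
\item For (ii), one direction: suppose $\mathcal{P}$ is down-directed with no least element. Then every $V_a$ is nonempty by Theorem~\ref{BS}, and $V_c\subseteq V_a\cap V_b$ for any lower bound $c$ of $a,b$, so the whole spectrum is irreducible. A generic point $J$ given by sobriety would satisfy $J\subseteq I$ for every $I\in\operatorname{Spec}\mathcal{P}$. That forces $V_a=\varnothing$ for $a\in J$, a contradiction.
\item For (ii), the other direction: if $0$ is least, then $\varphi(0)=\{0\}$. Running the argument of (d) with $C=\operatorname{Spec}\mathcal{P}$ shows that $\{0\}$ is a prime generic point whenever the spectrum is irreducible, so the spectrum is sober.
\end{itemize}
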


\medskip

\section{Computable versions of dualities I: General objects}\label{sect:gen-obj}

\noindent
Here we discuss how the duality of Theorem~\ref{T:Stone} could be made effective. In Subsection~\ref{subsect:pres-obj}, we give our `effectivized' notions of $c$-posets and spaces with base (Definitions~\ref{def:comp-c-poset} and~\ref{def:comp-top-space-with-base}). Theorem~\ref{prop:complexity-presentations} and Corollary~\ref{C:Stone-Z} show that in a sense, these effectivized notions are perfectly aligned: one can establish the dual equivalence of the corresponding full subcategories. We postpone the discussion of the effective morphisms until Subsection~\ref{subsect:morphisms}.

In Subsection~\ref{subsect:deg-spectra}, we give the first application of our framework. We transfer the well-known notion of the \emph{degree spectrum} of a countable algebraic structure (not to be confused with the spectra of $c$-posets from Definition~\ref{D:SpL}) into our setting (see Definitions~\ref{def:Z-comp-things} and~\ref{def:spectra-of-spaces}). Theorem~\ref{theo:degree-spectra} proves that every such degree spectrum of a structure can also be realized as the spectrum of a second-countable space with base.

First, we give the necessary background on computability theory.
Here we view $\mathcal{P}(\omega)$ as a topological space equipped with the \emph{Scott topology}. The basis for the Scott topology contains sets
\[
	[D] = \{ A \subseteq \omega\mid D \subseteq A\}, \text{ for } D\subseteq_{\mathrm{fin}} \omega.
\]

We fix the standard numbering of the family of all finite subsets of $\omega$: $D_0 = \varnothing$, and if $k = 2^{x_0} + 2^{x_1} + \dots + 2^{x_m}$ for $x_0 < x_1 < \dots < x_m$, then $D_k = \{ x_0, x_1, \dots, x_m\}$. As usual, for $x,y\in\omega$ 
\[
	\langle x,y\rangle = \frac{(x+y)(x+y+1)}{2} + x.
\]

For a set $Z\subseteq \omega$ we will say below that a function $f$ is \emph{$Z$-computable} if $f$ can be computed on a Turing machine with the oracle $Z$. Informally this means that the function $f$ can be computed using membership queries for $Z$. A set $X\subseteq\omega$ is  \emph{$Z$-computable} if the characteristic function of $X$ is $Z$-computable. 
A set $X\subseteq\omega$ is  \emph{$Z$-computably enumerable} (\emph{$Z$-c.e.}) if $X$ is either empty or the range of a $Z$-computable function. Informally this means that  $X$ can be algorithmically enumerated using membership queries for $Z$. These notions are standard for computability theory, see, e.g., \cite{Soare} and \cite{DowneyMelnikov}. Also  we will use below  the standard list of all $\emptyset$-computably enumerable (c.e.) sets $(W_i)_{i\in\omega}$.

\medskip

\begin{definition}\cite[Definition~3.18]{Case}
	A set $A\subseteq \omega$ defines a \emph{generalized enumeration operator} $\Gamma_A\colon \mathcal{P}(\omega) \to \mathcal{P}(\omega)$ iff for every set $B \subseteq \omega$, we have
	\[
		\Gamma_A (B) = \bigl\{ x\mid\exists k ( \langle x, k\rangle \in A \ \&\ D_k \subseteq B )\bigr\}.
	\]
\end{definition}

\noindent
The following result is well-known.

\begin{proposition}[folklore]
	A function $\Gamma \colon \mathcal{P}(\omega) \to \mathcal{P}(\omega)$ is continuous if and only if $\Gamma$ is a generalized enumeration operator. 
\end{proposition}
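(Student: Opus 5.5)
We prove both directions, using the Scott topology on $\mathcal{P}(\omega)$ with basic open sets $[D_k]$.

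For the easy direction, suppose $\Gamma=\Gamma_A$ for some $A\subseteq\omega$. It suffices to show that $\Gamma^{-1}([\{x\}])$ is open for each $x$. This is enough because every basic set satisfies $[D]=\bigcap_{x\in D}[\{x\}]$, with $[\varnothing]=\mathcal{P}(\omega)$, and preimages commute with finite intersections. By the definition of $\Gamma_A$,
\[
\Gamma_A^{-1}\bigl([\{x\}]\bigr)=\{B\mid x\in\Gamma_A(B)\}=\bigcup_{k\colon\langle x,k\rangle\in A}[D_k],
\]
which is a union of basic open sets. Hence $\Gamma_A$ is continuous.

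For the converse, let $\Gamma$ be continuous and put
\[
A=\{\langle x,k\rangle\mid x\in\Gamma(D_k)\}.
\]
We claim $\Gamma=\Gamma_A$, and we first record that $\Gamma$ is monotone. The Scott specialization order is inclusion: $B\subseteq C$ holds iff every open set containing $B$ contains $C$, since each basic open set is an up-set. Continuous maps preserve specialization order, so $B\subseteq C$ implies $\Gamma(B)\subseteq\Gamma(C)$.

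Now fix $B$. For $\Gamma_A(B)\subseteq\Gamma(B)$: if $\langle x,k\rangle\in A$ and $D_k\subseteq B$, then $x\in\Gamma(D_k)\subseteq\Gamma(B)$ by monotonicity. For $\Gamma(B)\subseteq\Gamma_A(B)$: if $x\in\Gamma(B)$, then $B$ lies in the open set $\Gamma^{-1}([\{x\}])$. Some basic neighbourhood $[D_k]\ni B$ is contained in it, so $D_k\subseteq B$. Since $D_k\in[D_k]$, we get $x\in\Gamma(D_k)$, i.e.\ $\langle x,k\rangle\in A$ with $D_k\subseteq B$, which means $x\in\Gamma_A(B)$.

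The only step needing care is monotonicity, which we derived from the specialization order. It can also be checked directly: if $B\subseteq C$ and $x\in\Gamma(B)$, choose $[D]\ni B$ inside $\Gamma^{-1}([\{x\}])$; then $D\subseteq B\subseteq C$, so $C\in[D]$ and $x\in\Gamma(C)$.
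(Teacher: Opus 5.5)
Your proof is correct and complete in both directions. For continuity, the preimage computation $\Gamma_A^{-1}([\{x\}])=\bigcup_{\langle x,k\rangle\in A}[D_k]$ does the work. For the converse, you take $A=\{\langle x,k\rangle\mid x\in\Gamma(D_k)\}$ and combine monotonicity with the openness of $\Gamma^{-1}([\{x\}])$ to get $\Gamma=\Gamma_A$.

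The paper gives no proof of its own here; it cites the result as folklore. What you wrote is the standard argument, recovering $\Gamma$ from its values on finite sets. The only thing to remark is that your second derivation of monotonicity duplicates the first, which is harmless.
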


\noindent
We use notation $\Gamma_i = \Gamma_{W_{i}}$, and we say that $\Gamma_i$ is an \emph{enumeration operator}. It is clear that $\Gamma_i(C)$ is c.e.\ if $C$ is c.e.

Suppose that $B,C \subseteq \omega$. The set $B$ is \emph{enumeration reducible} to $C$ (denoted by $B \leq_e C$) if there exists an enumeration operator $\Gamma_i$ such that $B = \Gamma_i (C)$.

\bigskip

\subsection{Presentations of objects}\label{subsect:pres-obj}
{\ }

Let $Z \subseteq \omega$. We introduce the following notions.

\begin{definition}\label{def:comp-c-poset}
	We say that a $c$-poset $\mathcal{P}=\langle P;\leq,\varphi\rangle$ is \emph{$Z$-computably enumerable} if $\mathcal{P}$ has the following properties:
	\begin{itemize}
		\item $\langle P;\leq\rangle$ is a $Z$-c.e.\ poset in the standard sense of computable structure theory (that is, $P$ is a $Z$-computable subset of $\omega$, and the relation $\leq$ is $Z$-computably enumerable);
		
		\item $\varphi$ equals to a generalized enumeration operator $\Gamma_A$ such that $A$ is $Z$-c.e.
	\end{itemize}
\end{definition}

\medskip

\begin{definition}\label{def:comp-top-space-with-base}
	Suppose that $\mathbb{X}=\langle X,\mathcal{T},\mathcal{B}\rangle$ is a topological space with base such that its basis $\mathcal{B}$ is at most countable. Let $\beta$ be a surjection acting from $\omega$ onto $\mathcal{B}$. We say that $\langle X,\mathcal{T},\mathcal{B},\beta\rangle$ is a \emph{$Z$-computably enumerable space with base} if:
	\begin{itemize}
		\item the set $\{ (i,j)\mid \beta(i) \neq \beta(j)\}$ is $Z$-computably enumerable, and 
		\item the binary predicate
		\begin{equation}\label{equ:ce-space}
			\mathrm{Inc}_{\mathcal{B}} = \bigl\{ (i,k)\mid\beta(i) \subseteq \bigcup_{j\in D_k} \beta(j),\ D_k \neq \varnothing \bigr\}
		\end{equation}
		is also $Z$-c.e.
	\end{itemize}
\end{definition}

\noindent
We note that Definition~\ref{def:comp-top-space-with-base} has a flavor that is similar to the familiar notion of a \emph{computable topological space} (\cite[Definition~3.1]{GW}, see also \cite[Definition~1.3]{KK-17}).

We will sometimes write $\langle X,\mathcal{T}, (B_i)_{i\in\omega} \rangle$ in place of $\langle X,\mathcal{T},\mathcal{B},\beta\rangle$ meaning that $\beta(i) = B_i$ for $i\in\omega$ and $\mathcal{B} = \{ B_i\mid i\in\omega\}$. We establish the following standard result:

\begin{lemma}\label{lem:injective}
For an oracle $Z\subseteq \omega$, the following statements hold.
\begin{enumerate}
\item[{\rm (a)}]
Let $\mathcal{P}=\langle P;\leq,\varphi\rangle$ be a $Z$-computably enumerable $c$-poset. If the set $P$ is $($countably$)$ infinite, then $\mathcal{P}$ is isomorphic to a $Z$-computably enumerable $c$-poset of the form $\tilde{\mathcal{P}} = \langle \omega; \leq_{\tilde{\mathcal{P}}}, \psi\rangle$.
\item[{\rm (b)}]
Let $\langle X,\mathcal{T},\mathcal{B}, \beta\rangle$ be a $Z$-computably enumerable space with base such that the basis $\mathcal{B}$ is $($countably$)$ infinite. Then there exists a bijection $\tilde\beta \colon \omega \to \mathcal{B}$ such that the space $\langle X,\mathcal{T},\mathcal{B}, \tilde\beta\rangle$ is also $Z$-computably enumerable.
\end{enumerate}
\end{lemma}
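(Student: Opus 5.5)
Both parts rest on the same idea: transport the structure along a bijection that is computable in $Z$, and check that the two c.e. conditions survive the transport.

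For part (a), the set $P\subseteq\omega$ is $Z$-computable and infinite. I will take the order-preserving enumeration $p_0<p_1<\dots$ of $P$ and let $h\colon\omega\to P$, $h(n)=p_n$. This $h$ is a $Z$-computable bijection, and its inverse $P\to\omega$ is also $Z$-computable: to compute $h^{-1}(p)$, count the elements of $P$ below $p$ using the oracle $Z$.

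I then define $\tilde{\mathcal P}$ on $\omega$ as follows.
\begin{itemize}
\item The order is $m\leq_{\tilde{\mathcal P}} n$ iff $h(m)\leq h(n)$; this relation is $Z$-c.e.
\item The closure operator is $\psi(B)=h^{-1}\bigl(\varphi(h(B))\bigr)$.
\end{itemize}
Then $h$ is a $c$-poset isomorphism in the sense of Definition~\ref{D:Pm}.

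It remains to write $\psi$ as $\Gamma_{\tilde A}$ with $\tilde A$ $Z$-c.e. Let $\varphi=\Gamma_A$ with $A$ $Z$-c.e. Put $\langle n,k\rangle$ into $\tilde A$ whenever $\langle h(n),k'\rangle\in A$, $D_{k'}\subseteq P$, and $D_k=h^{-1}(D_{k'})$. This set is $Z$-c.e. because $h$, $h^{-1}$ and $P$ are $Z$-computable.

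To see that $\Gamma_{\tilde A}(B)=\psi(B)$ for every $B\subseteq\omega$, I need one fact: $\varphi(Y)$ depends only on $Y\cap P$. In the definition of $\Gamma_A(Y)$, an axiom $\langle x,k'\rangle$ with $D_{k'}\not\subseteq P$ could fire on an argument $Y\not\subseteq P$. Such arguments never arise here, since we only evaluate $\varphi$ on subsets of $P$. So restricting to $D_{k'}\subseteq P$ is harmless, and the equality follows.

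Part (b) is the main work: I must pass to a repetition-free listing of $\mathcal B$ while keeping both predicates $Z$-c.e.

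Step 1: choose representatives. The inequality relation $\beta(i)\neq\beta(j)$ is $Z$-c.e., but equality need not be, so the usual ``take the least index of each class'' selection is unavailable. I will use the standard trick for c.e. inequality instead. Build a $Z$-computable sequence $s_0,s_1,\dots$ in stages, guaranteeing the following.
\begin{itemize}
\item Every index $i$ eventually has some $s_n$ with $\beta(s_n)=\beta(i)$.
\item Distinct positions $n\neq m$ have $\beta(s_n)\neq\beta(s_m)$.
\end{itemize}
Since we cannot confirm equality, the construction proceeds as follows. At each stage we look for a fresh index $j$, together with an enumerated certificate that $\beta(j)\neq\beta(s_m)$ for every earlier $s_m$, with the certificates taken from the inequality enumeration. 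Whenever such $j$ and certificates are found, $j$ is appended to the sequence.

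Completeness: if $\beta(i)$ differs from all listed elements, that fact is eventually enumerated, so $\beta(i)$ gets added. Infinitude: since $\mathcal B$ is infinite, the search always succeeds, so the construction never stalls. Dovetailing over candidates $j$ is needed so that no particular $i$ is starved forever.

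Step 2: define $\tilde\beta(n)=\beta(s_n)$. By Step 1 this is a bijection onto $\mathcal B$. Its inequality relation is all pairs $(n,m)$ with $n\neq m$, which is computable. Finally, $(n,k)\in\mathrm{Inc}_{\mathcal B}$ for $\tilde\beta$ iff $(s_n,k')\in\mathrm{Inc}_{\mathcal B}$ for $\beta$, where $D_{k'}=\{s_j\mid j\in D_k\}$. This is $Z$-c.e. because $n\mapsto s_n$ is $Z$-computable.
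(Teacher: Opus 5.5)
Your part (a) is correct and is the paper's argument. The paper transports along the same bijection and uses the same set; your restriction $D_{k'}\subseteq P$ is left implicit there.

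\textbf{Gap in part (b).} Step~1 does not work as stated. Your completeness claim, ``if $\beta(i)$ differs from all listed elements, that fact is eventually enumerated, so $\beta(i)$ gets added,'' does not follow. The set of elements that $i$ must be certified against keeps growing. Each time some other candidate is appended, $i$ needs a fresh certificate $\beta(i)\neq\beta(s_n)$, and nothing prevents this from repeating forever.

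Concretely, take $\beta$ injective. The inequality set can be enumerated so that the pair $(i,s_n)$, for the most recently appended $s_n$, appears only after $s_{n+1}$ has been appended; some fresh large $j$ whose certificates appear quickly supplies $s_{n+1}$. Then $i$ is never certified against the whole current list, even though every individual pair is eventually enumerated. Dovetailing over candidates does not help, because the obstruction is a moving target, not a neglected candidate. Preferring the least candidate does not help either, since you cannot afford to wait on a candidate that may equal a listed element.

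\textbf{Your premise is also wrong.} Equality is $Z$-c.e. Taking $D_k=\{j\}$ in $\mathrm{Inc}_{\mathcal B}$ shows that $\beta(i)\subseteq\beta(j)$ is $Z$-c.e., so $\beta(i)=\beta(j)$ is $Z$-c.e. Combined with the $Z$-c.e. inequality relation, equality is $Z$-computable.

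\textbf{The paper's route.} The paper takes $V=\{i\mid \forall j<i\ \beta(j)\neq\beta(i)\}$. This is an infinite $Z$-computable set containing exactly one index for each basic set. It then puts $\tilde\beta=\beta\circ f$ for a $Z$-computable bijection $f\colon\omega\to V$.

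Even with only c.e.\ inequality, $V$ is $Z$-c.e., because the quantifier is bounded. A $Z$-computable repetition-free enumeration of $V$ would then serve as your sequence $(s_n)$. The point is to certify $i$ against all \emph{smaller indices}, a fixed finite set, rather than against the growing list. With that sequence, your Step~2 goes through unchanged.
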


\begin{proof}
(a)\ Since the set $P$ is $Z$-computable and infinite, one can choose a $Z$-computable bijection $f\colon \omega\to P$. We put:
	\begin{itemize}
		\item[(i)]
		$i \leq_{\tilde{\mathcal{P}}} j$ if and only if $f(i) \leq f(j)$,
		
		\item[(ii)]
		for $X\subseteq \omega$, $\psi(X) = f^{-1}(\varphi(f(X)))$.
	\end{itemize}
	If $\varphi = \Gamma_A$, then one can deduce that $\psi = \Gamma_{B}$, where
	\[
		B = \bigl\{ \langle f^{-1}(x), k^\prime\rangle\mid \langle x,k\rangle \in A \text{ and } D_k = f(D_{k^\prime})\bigr\}.
	\]
	Since $A$ is $Z$-c.e., the set $B$ is also $Z$-c.e. Hence, the $c$-poset $\tilde{\mathcal{P}} = \langle \omega; \leq_{\tilde{\mathcal{P}}}, \psi\rangle$ is $Z$-c.e.
	In view of (i)--(ii) above, the map $f$ is a $\mathbf{DP}$-isomorphism.

	(b)\ Definition~\ref{def:comp-top-space-with-base} implies that the set $V = \bigl\{i<\omega\mid\forall j < i\ \ \beta(j) \neq \beta(i)\bigr\}$ is an infinite $Z$-computable set. Thus, one can choose a $Z$-computable bijection $f$ acting from $\omega$ onto $V$. We put $\tilde\beta = \beta \circ f$.
\end{proof}

\medskip
\noindent
We establish the following result on the complexity of presentations for (countably presentable) objects in the categories $\mathbf{AS}$ and $\mathbf{DP}$.

\begin{theorem}\label{prop:complexity-presentations}
	For an oracle $Z\subseteq \omega$ and an almost semispectral space with base $\mathbb{X}=\langle X,\mathcal{T},\mathcal{B}\rangle$, the following conditions are equivalent:
	\begin{itemize}
		\item[(a)] the space $\mathbb{X}$ is $\mathbf{AS}$-isomorphic to a $Z$-computably enumerable topological space with base,
		
		\item[(b)] the dual $c$-poset $\langle\mathcal{B}(\mathbb{X});\subseteq,\varphi_\mathcal{B}\rangle$  is $\mathbf{DP}$-isomorphic to a $Z$-computably enumerable $c$-poset.
	\end{itemize}
\end{theorem}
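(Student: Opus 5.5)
We prove both implications by translating presentations directly. Both notions are essentially the same data: a countable family of names, a c.e.\ ``inequality/order'' relation, and a c.e.\ covering relation. Note first that an $\mathbf{AS}$-isomorphism $h\colon\mathbb{X}\to\mathbb{Y}$ is a homeomorphism with $h^{-1}$ inducing a bijection $\mathcal{B}(\mathbb{Y})\to\mathcal{B}(\mathbb{X})$. By functoriality of $\mathsf{P}$ (Theorem~\ref{T:Stone}), $\mathsf{P}(\mathbb{X})\cong\mathsf{P}(\mathbb{Y})$ in $\mathbf{DP}$. Conversely, a $\mathbf{DP}$-isomorphism of $c$-posets yields an $\mathbf{AS}$-isomorphism of duals via $\mathsf{T}$. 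So it suffices to show two things: a $Z$-c.e.\ space with base has a $Z$-c.e.\ dual $c$-poset, and a $Z$-c.e.\ $c$-poset $\mathcal{P}$ is dual to a $Z$-c.e.\ space with base, namely $\operatorname{\mathbb{S}pec}\mathcal{P}$ together with Theorem~\ref{T:TP}. Finite bases and finite posets are trivial, since everything finite is computable, so we may assume infiniteness and use Lemma~\ref{lem:injective}.

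(a)$\Rightarrow$(b). Let $\langle X,\mathcal{T},\mathcal{B},\beta\rangle$ be $Z$-c.e. By Lemma~\ref{lem:injective}(b), take $\beta$ bijective. Define the $c$-poset on $\omega$ by
\[
i\leq j \iff \beta(i)\subseteq\beta(j),
\]
which holds iff $(i,k)\in\mathrm{Inc}_\mathcal{B}$ with $D_k=\{j\}$, so $\leq$ is $Z$-c.e. The universe $\omega$ is computable. Set $\varphi=\Gamma_A$ with
\[
A=\{\langle i,k\rangle : (i,k)\in\mathrm{Inc}_\mathcal{B}\},
\]
which is $Z$-c.e. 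We then verify that $\Gamma_A(S)=\varphi_\mathcal{B}(\beta(S))$ pulled back along $\beta$. The inclusion $\supseteq$ uses compactness of each $\beta(i)$, which is where almost semispectrality enters: if $\beta(i)\subseteq\bigcup\beta(S)$ for nonempty $S$, then a finite nonempty $D_k\subseteq S$ already covers $\beta(i)$. Also $\Gamma_A(\varnothing)=\varnothing$ because $D_k\neq\varnothing$ is required. Then $\beta$ is an isomorphism of $c$-posets onto $\langle\mathcal{B};\subseteq,\varphi_\mathcal{B}\rangle$.

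(b)$\Rightarrow$(a). Assume $\mathsf{P}(\mathbb{X})\cong\mathcal{P}=\langle P;\leq,\Gamma_A\rangle$ with $A$ $Z$-c.e., and by Lemma~\ref{lem:injective}(a) take $P=\omega$. By Theorem~\ref{T:Stone} (or Theorem~\ref{T:TP}), $\mathbb{X}$ is $\mathbf{AS}$-isomorphic to $\mathsf{T}(\mathcal{P})=\operatorname{\mathbb{S}pec}\mathcal{P}$ with base $\{V_a\}$. Put $\beta(a)=V_a$. By Lemma~\ref{L:phi}(iii), $V_a\neq V_b$ iff $a\neq b$, since $\leq$ is antisymmetric. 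Hence the set $\{(a,b): \beta(a)\neq\beta(b)\}$ is just $\{(a,b): a\neq b\}$, which is computable. By Lemma~\ref{L:phi}(i),(ii), for nonempty $D_k$ we have
\[
V_a\subseteq\bigcup_{j\in D_k}V_j=V_{D_k} \iff a\in\varphi(D_k)=\Gamma_A(D_k).
\]
The right-hand side is the condition $\exists m\,(\langle a,m\rangle\in A \ \&\ D_m\subseteq D_k)$, which is $Z$-c.e. Hence $\mathrm{Inc}$ is $Z$-c.e.

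The main point to check carefully is the compactness reduction in (a)$\Rightarrow$(b). The correct definition of $\varphi_\mathcal{B}$ on infinite sets must match the algebraic operator $\Gamma_A$; that it does relies on each basic set being compact open. We also need that isomorphic transport of a $c$-poset structure (not just the order) is preserved, which is exactly what $\beta$ being a bijection ensures.
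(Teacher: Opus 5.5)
Your proposal is correct and follows essentially the same route as the paper. In (a)$\Rightarrow$(b) you make $\beta$ injective, identify basic sets with their indices, and take $\varphi=\Gamma_A$ with $A$ read off from $\mathrm{Inc}_{\mathcal{B}}$; in (b)$\Rightarrow$(a) you pass to $\operatorname{\mathbb{S}pec}\mathcal{P}$ with $\beta(a)=V_a$ and use Lemma~\ref{L:phi}(ii),(iii). You also make explicit two points the paper leaves tacit: the compactness argument showing that $\Gamma_A$ agrees with $\varphi_{\mathcal{B}}$ on infinite sets, and the use of Lemma~\ref{lem:injective}(a) (rather than (b), which the paper cites) to put the $c$-poset on $\omega$.
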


\begin{proof}
	Without loss of generality, we may assume that the basis $\mathcal{B}$ is countably infinite (the case of a finite basis $\mathcal{B}$ can be treated in a similar way). 

	\underline{(a)$\Longrightarrow$(b).} Let $\langle X,\mathcal{T}, (B_i)_{i\in\omega} \rangle$ be a $Z$-computably enumerable space with base. By Lemma~\ref{lem:injective}.(b), we may assume that $B_i \neq B_j$ for $i\neq j$.
	
	Consider the dual $c$-poset $\mathcal{P} = \bigl\langle \{B_i\mid i\in\omega\}; \subseteq, \varphi_{\mathcal{B}}\bigr\rangle$,
	where for $\mathcal{X} \subseteq \{ B_i\mid i\in\omega\}$, we have
	\[
		\varphi_{\mathcal{B}}(\mathcal{X}) = \begin{cases}
			\varnothing, & \text{if } \mathcal{X} = \varnothing,\\
			\bigl\{ B_j\mid B_j \subseteq \bigcup \mathcal{X}\bigr\}, & \text{if } \mathcal{X} \neq \varnothing.
		\end{cases}
	\]
	If we identify a basic set $B_i$ with its index $i$, then we may assume that the $c$-poset $\mathcal{P}$ has form $\langle \omega; \leq_{\mathcal{P}}, \varphi\rangle$, where:
	\begin{itemize}
		\item $i\leq_{\mathcal{P}} j$ if and only if $B_i \subseteq B_j$ (this is a $Z$-c.e.\ property, since the binary predicate $\mathrm{Inc}_{\mathcal{B}}$ from Eq.~(\ref{equ:ce-space}) is $Z$-c.e.);
		
		\item for $X\subseteq \omega$, we have
		\[
			\varphi(X) = \begin{cases}
			\varnothing, & \text{if } X = \varnothing,\\
			\bigl\{ j\mid B_j \subseteq \bigcup_{\ell \in D_k} B_{\ell} \text{ for some } \varnothing \neq D_k \subseteq X\bigr\}, & \text{if } X \neq \varnothing.
		\end{cases}
		\]
	\end{itemize}	
	
	Since $\mathrm{Inc}_{\mathcal{B}}$ is $Z$-c.e., the set 
	\begin{equation*}
		A = \Bigl\{ \langle j,k\rangle\mid B_j \subseteq \bigcup_{\ell\in D_k} B_\ell,\ D_k\neq \varnothing \Bigr\}
	\end{equation*}
	is also $Z$-c.e. Furthermore, observe that $\varphi$ is equal to the generalized enumeration operator $\Gamma_A$. Since the set $A$ is $Z$-c.e., we deduce that the $c$-poset $\mathcal{P}$ is $Z$-c.e.

	\smallskip
		
	\underline{(b)$\Longrightarrow$(a).} By Lemma~\ref{lem:injective}.(b), we may assume that the $c$-poset $\langle \mathcal{B}; \subseteq, \varphi_{\mathcal{B}}\rangle$ is $\mathbf{DP}$-isomorphic to a $Z$-c.e.\ $c$-poset $\mathcal{P} = \langle \omega; \leq, \varphi\rangle$, where $\varphi = \Gamma_C$ is a generalized enumeration operator.
		
	Consider the dual space $\operatorname{\mathbb{S}pec}\mathcal{P} = \langle\operatorname{Spec}\mathcal{P},\mathcal{T}_{\mathcal{P}}, \mathcal{B}_{\mathcal{P}}\rangle$, where $\mathcal{B}_{\mathcal{P}} = \{ V_a\mid a\in \omega\}$ 
	and
	\[
		V_a = \{ I \in \operatorname{Spec}\mathcal{P}\mid a\not\in I\}.
	\]
	For $a\in\omega$, we put $\beta(a) = V_a$. Then we have $\beta(a) \neq \beta(b)$ for all $a\neq b$
	by Lemma \ref{L:phi}(iii).
	
	For a finite set $F\subseteq_{\mathrm{fin}}\omega$, we have $V_a \subseteq \bigcup_{b\in F} V_b= \bigcup_{c\in \varphi(F)} V_c$ if and only if $a\in \varphi(F)$
	by Lemma \ref{L:phi}(ii). Therefore, the set
	\begin{gather*}
		\mathrm{Inc}_{\mathcal{B}_{\mathcal{P}}} = \Bigl\{ (i,k)\mid V_i \subseteq \bigcup_{j\in D_k} V_j,\ D_k \neq \varnothing \Bigr\} =
		\bigl\{ (i,k)\mid i \in \varphi(D_k),\ D_k \neq \varnothing\bigr\} =\\
		 \bigl\{ (i,k)\mid \langle i,k^\prime\rangle\in C \text{ for some } \varnothing\neq D_{k^\prime} \subseteq D_k\bigr\}
	\end{gather*}
	is $Z$-c.e. Therefore, the space $\operatorname{\mathbb{S}pec}(\mathcal{P})$ is $Z$-c.e. Proposition~\ref{prop:complexity-presentations} is proved.
\end{proof}

\noindent
Let $\mathbf{AS}^Z$ denote the full subcategory of $\mathbf{AS}$ whose objects are the almost semispectral spaces with base
which are $\mathbf{AS}$-isomorphic to $Z$-computably enumerable
almost semispectral topological spaces with base. Let also
$\mathbf{DP}^Z$ denote the full subcategory of $\mathbf{DP}$ whose objects are the $c$-distributive posets
which are $\mathbf{DP}$-isomorphic to $Z$-computably enumerable
$c$-distributive posets.

From Theorems \ref{T:Stone} and \ref{prop:complexity-presentations}, we obtain the following

\begin{corollary}\label{C:Stone-Z}
For an oracle $Z\subseteq \omega$, the two categories, $\mathbf{AS}^Z$ and $\mathbf{DP}^Z$, are dually equivalent.
\end{corollary}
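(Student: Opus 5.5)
The plan is to restrict the functors $\mathsf{P}$ and $\mathsf{T}$ of Theorem~\ref{T:Stone} to the full subcategories $\mathbf{AS}^Z$ and $\mathbf{DP}^Z$. We then check that these restrictions land in the right subcategories, after which the dual equivalence is inherited.

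Both subcategories are full and closed under isomorphism in their ambient categories; this holds by definition, since membership only asks for isomorphism to a $Z$-c.e.\ object. Hence restricting the functors changes nothing on morphisms. The only thing to verify is on objects:
\[
\mathsf{P}(\mathbf{AS}^Z)\subseteq\mathbf{DP}^Z
\quad\text{and}\quad
\mathsf{T}(\mathbf{DP}^Z)\subseteq\mathbf{AS}^Z.
\]

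For the first inclusion, take $\mathbb{X}\in\mathbf{AS}^Z$. Then $\mathsf{P}(\mathbb{X})=\langle\mathcal{B}(\mathbb{X});\subseteq,\varphi_\mathcal{B}\rangle$ is exactly the dual $c$-poset appearing in Theorem~\ref{prop:complexity-presentations}. The implication (a)$\Rightarrow$(b) of that theorem says it is $\mathbf{DP}$-isomorphic to a $Z$-c.e.\ $c$-poset, so $\mathsf{P}(\mathbb{X})\in\mathbf{DP}^Z$.

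For the second inclusion, take $\mathcal{P}\in\mathbf{DP}^Z$ and put $\mathbb{X}=\mathsf{T}(\mathcal{P})\in\mathbf{AS}$. By Theorem~\ref{T:Stone}, $\mathsf{P}\mathsf{T}(\mathcal{P})\cong\mathcal{P}$ in $\mathbf{DP}$. Since $\mathcal{P}$ is $\mathbf{DP}$-isomorphic to a $Z$-c.e.\ $c$-poset, so is the dual $c$-poset of $\mathbb{X}$. The implication (b)$\Rightarrow$(a) of Theorem~\ref{prop:complexity-presentations} then gives $\mathbb{X}\in\mathbf{AS}^Z$.

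It remains to transfer the natural isomorphisms. Theorem~\ref{T:Stone} provides natural isomorphisms $\mathrm{id}_{\mathbf{AS}}\cong\mathsf{T}\mathsf{P}$ and $\mathrm{id}_{\mathbf{DP}}\cong\mathsf{P}\mathsf{T}$. Their components at objects of $\mathbf{AS}^Z$ and $\mathbf{DP}^Z$ are isomorphisms between objects of these subcategories, by the two inclusions above. Because the subcategories are full, these components are morphisms there, and naturality is inherited. Hence the restricted functors form a dual equivalence.

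The only delicate point is that Theorem~\ref{prop:complexity-presentations} is stated for a space $\mathbb{X}$ and its own dual poset. For the second inclusion we therefore pass through $\mathsf{P}\mathsf{T}(\mathcal{P})\cong\mathcal{P}$ instead of applying the theorem to $\mathcal{P}$ directly. We also use that the theorem's hypothesis, that $\mathbb{X}$ be almost semispectral, holds for $\mathsf{T}(\mathcal{P})$; this is guaranteed by Theorem~\ref{T:Stone}.
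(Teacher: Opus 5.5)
Your proposal is correct and follows the paper's own route. The paper obtains the corollary directly from Theorem~\ref{T:Stone} and Theorem~\ref{prop:complexity-presentations}, and you have written out the routine details it leaves implicit: the two object-level inclusions (passing through $\mathsf{P}\mathsf{T}(\mathcal{P})\cong\mathcal{P}$ for the second), and the inheritance of the natural isomorphisms via fullness and closure under isomorphism.
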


\medskip

\subsection{On degree spectra of structures}\label{subsect:deg-spectra}
{\ }

In this subsection, we consider only computable signatures $\sigma$. A $\sigma$-formula $\psi(x_1,x_2,\dots,x_k)$ is identified with its G{\"o}del number.

If $\mathcal{A}$ is a $\sigma$-structure with $\operatorname{dom}(\mathcal{A}) \subseteq \omega$, then by $D(\mathcal{A})$ we denote the atomic diagram of the structure $\mathcal{A}$.

\smallskip

Let $\mathcal{S}$ be a countably infinite $\sigma$-structure. The \emph{degree spectrum} of the structure $\mathcal{S}$ is the following set of Turing degrees:
\[
	\operatorname{DgSp}(\mathcal{S}) = \{ \deg_T(D(\mathcal{A}))\mid \mathcal{A} \cong \mathcal{S},\ \operatorname{dom}(\mathcal{A}) = \omega\}.
\]

A structure $\mathcal{S}$ is \emph{automorphically trivial} if there exists a finite set $F\subseteq \operatorname{dom}(\mathcal{S})$ such that every permutation of $\operatorname{dom}(\mathcal{S})$ that keeps the set $F$ fixed is an automorphism of the structure $\mathcal{S}$. Knight~\cite{Knight-86} proved that, for every automorphically nontrivial, countable structure $\mathcal{S}$, its degree spectrum $\operatorname{DgSp}(\mathcal{S})$ is closed upwards in Turing degrees. Hence, for such a structure $\mathcal{S}$ we have
\begin{multline}\label{equ:DgSp}
	\operatorname{DgSp}(\mathcal{S}) = \{ \mathbf{d}\mid \mathbf{d} \text{ is a Turing degree such that}\\ 
	\mathcal{S} \text{ has a } \mathbf{d}\text{-computable isomorphic copy} \}.
\end{multline}
The following result is well-known.

\begin{proposition}[see, e.g., Theorem~3.2 of~\cite{HKSS}]\label{prop:HKSS}
	For every automorphically non-trivial, countable $\sigma$-structure $\mathcal{S}$, there exists a countable poset $\mathcal{P}_{\mathcal{S}} = (\omega;\leq_{\mathcal{S}})$ such that $\operatorname{DgSp}(\mathcal{P}_\mathcal{S}) = \operatorname{DgSp}(\mathcal{S})$.
\end{proposition}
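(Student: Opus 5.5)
The plan is the standard coding of an arbitrary countable structure into a graph, and then of the graph into a poset. Since Proposition~\ref{prop:HKSS} is quoted from \cite{HKSS}, we only sketch that argument rather than give a new one.

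\emph{Step 1 (reduction to graphs).} Using the classical result (Hirschfeldt--Khoussainov--Shore--Slinko), we pass from $\mathcal{S}$ to a countable symmetric irreflexive graph $\mathcal{G}_{\mathcal{S}}$ with $\operatorname{DgSp}(\mathcal{G}_{\mathcal{S}})=\operatorname{DgSp}(\mathcal{S})$. Each tuple in each relation of $\mathcal{S}$ (the signature is computable, and functions are replaced by their graphs) is coded by attaching rigid, pairwise distinct gadgets of coding cycles. The key properties are two. First, from any copy of $\mathcal{S}$ one builds a copy of $\mathcal{G}_{\mathcal{S}}$ computably in its diagram. Second, from any copy of $\mathcal{G}_{\mathcal{S}}$ one recovers, uniformly and computably in its diagram, a copy of $\mathcal{S}$; this works because the original elements and the gadgets are distinguishable by a decidable finite local pattern. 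Automorphic nontriviality is preserved by this coding, so Knight's theorem gives upward closure on both sides. The spectra then coincide by characterization~(\ref{equ:DgSp}).

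\emph{Step 2 (graphs into posets).} Given a graph $\mathcal{G}=(V,E)$, we form the poset of height two. Its minimal elements are the vertices $v\in V$ together with extra marker elements. Its maximal elements are the edges $\{u,v\}\in E$, each placed above exactly its two endpoints. To make vertices definable, we attach to each vertex $v$ a fixed finite rigid gadget, for instance two extra elements above $v$ alone. Then a minimal element is a vertex exactly when it lies below an element with a unique lower cover, and so on. As in Step~1, copies of the poset and copies of the graph are interconvertible, each computably in the other's diagram. This is checked by reading off the local finite configurations. Nontriviality is again preserved, and relabelling onto domain $\omega$ is trivial.

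\emph{Main obstacle.} The delicate point is the decoding direction in each step. Because we only have the atomic diagram, and not an existential oracle, we must ensure that the coding gadgets are recognizable by a computable (indeed quantifier-bounded) search. An element's role must be decided after finitely many queries, so that non-edges are certified as well as edges. We would handle this by making every gadget membership witnessed by a bounded configuration, e.g.\ via degree constraints, and by choosing codes so that the absence of an edge is itself positively coded by a complementary gadget. This requires a separate marker structure for non-adjacency between every pair of original elements.
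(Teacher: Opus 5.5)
Your overall strategy is the one behind the citation. The paper gives no argument of its own and simply quotes Theorem~3.2 of \cite{HKSS}, which rests on this kind of coding: structure into graph, graph into partial order, and Knight's theorem for upward closure.

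However, your concrete Step~2 does not do what you claim. In the poset you describe, ``$v$ lies below an element with a unique lower cover'' is a $\Sigma_2$ property in the atomic diagram, since you must certify that nothing besides $v$ lies below $a_v$. Adjacency, ``some element lies above both $u$ and $v$'', is $\Sigma_1$, so non-adjacency is $\Pi_1$. A copy of that poset therefore only makes the edge relation c.e., and the asserted interconvertibility fails. Your proposed remedy ``via degree constraints'' has the same defect. An exact degree, like a unique lower cover, is a universal statement that no finite set of queries settles; only lower bounds on degree are existentially witnessed.

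What is needed is that every role, and both adjacency and non-adjacency, be certified by mutually exclusive existential configurations. For example, for each pair of distinct vertices $u,v$ add an element $p_{uv}$ above $u$ and $v$. Above $p_{uv}$, put a two-element chain if $\{u,v\}\in E$ and two incomparable elements otherwise. Then:
\begin{enumerate}
\item[(1)] the vertices are exactly those $x$ for which some $y$ incomparable to $x$ has a common upper bound with $x$ (for every other element, whatever shares an upper bound with it is comparable to it);
\item[(2)] $\{u,v\}\in E$ iff some $z$ above $u$ and $v$ is the bottom of a three-element chain;
\item[(3)] $\{u,v\}\notin E$ iff some $z$ above $u$ and $v$ has two incomparable elements above it.
\end{enumerate}
All three conditions are $\Sigma_1$. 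So, relative to any copy, you can enumerate the vertices without repetition (c.e.\ recognition suffices for producing a copy on $\omega$) and decide edges by parallel search. Conversely, the poset is plainly computable from $D(\mathcal{G})$.

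Step~1 likewise needs an explicit gadget for every tuple \emph{not} in each relation, which your description of that step leaves out. With these repairs your sketch becomes a correct proof.
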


\medskip
\noindent
Motivated by the degree spectra of structures, we introduce the following notions:

\begin{definition}\label{def:Z-comp-things}
We say that a $c$-poset $\mathcal{P}=\langle P;\leq,\varphi\rangle$ is \emph{$Z$-computable} if $\mathcal{P}$ has the following properties:
	\begin{itemize}
		\item $\langle P;\leq\rangle$ is a $Z$-computable poset (i.e., $P$ is a $Z$-computable subset of $\omega$, and the relation $\leq$ is $Z$-computable);
		
		\item $\varphi$ equals to a generalized enumeration operator $\Gamma_A$ satisfying $A\leq_T Z$.
	\end{itemize}

Let $\mathbb{X}=\langle X,\mathcal{T},\mathcal{B}\rangle$ be a topological space with base, and let $\beta$ be a surjection from $\omega$ onto $\mathcal{B}$. We say that $\langle X,\mathcal{T},\mathcal{B},\beta\rangle$ is a \emph{$Z$-computable space with base} if the binary predicate $\mathrm{Inc}_{\mathcal{B}}$ from Eq.~(\ref{equ:ce-space}) is $Z$-computable.
\end{definition}

\begin{definition}\label{def:spectra-of-spaces}
	(a)\ Let $\mathcal{P}$ be a countable $c$-poset from $\mathbf{DP}$. The \emph{degree spectrum of the $c$-poset} $\mathcal{P}$ (denoted by $\operatorname{DgSp}(\mathcal{P})$) is the set of all Turing degrees $\mathbf{d}$ such that $\mathcal{P}$ is $\mathbf{DP}$-isomorphic to a $\mathbf{d}$-computable $c$-poset.
	
	(b)\ Let $\mathbb{X}$ be a second-countable space from $\mathbf{AS}$. The \emph{degree spectrum of the second-countable topological space with base} $\mathbb{X}$ (denoted by $\operatorname{DgSp}(\mathbb{X})$) is the set of all Turing degrees $\mathbf{d}$ such that $\mathbb{X}$ is $\mathbf{AS}$-isomorphic to a $\mathbf{d}$-computable space with base.
\end{definition}

\noindent
Essentially by repeating the proof of Proposition~\ref{prop:complexity-presentations}, we obtain the following

\begin{proposition}\label{prop:deg-sp}
	Let $\mathbb{X}$ be a second-countable, almost semispectral space with base. Then its dual $c$-poset $\mathsf{P}(\mathbb{X})$ satisfies:
	\[
		\operatorname{DgSp}(\mathsf{P}(\mathbb{X})) = \operatorname{DgSp}(\mathbb{X}).
	\]
\end{proposition}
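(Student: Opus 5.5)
We prove the two inclusions separately, following the two halves of the proof of Theorem~\ref{prop:complexity-presentations} but tracking Turing-computability instead of enumerability. Throughout we use that $\mathbf{AS}$-isomorphisms of spaces correspond, via the functors $\mathsf{P}$ and $\mathsf{T}$ of Theorem~\ref{T:Stone}, to $\mathbf{DP}$-isomorphisms of the dual $c$-posets. Hence it suffices to work with a single convenient presentation on each side, and both degree spectra are isomorphism invariants. As before, we assume the basis $\mathcal{B}$ is countably infinite; the finite case is handled the same way.

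\emph{Inclusion $\operatorname{DgSp}(\mathbb{X})\subseteq\operatorname{DgSp}(\mathsf{P}(\mathbb{X}))$.}
\begin{enumerate}
\item[(1)] Let $\mathbf{d}=\deg_T(Z)$, and let $\langle X',\mathcal{T}',\mathcal{B}',\beta\rangle$ be a $Z$-computable space with base that is $\mathbf{AS}$-isomorphic to $\mathbb{X}$.
\item[(2)] Equality $\beta(i)=\beta(j)$ holds iff both $(i,\{j\})$ and $(j,\{i\})$ lie in $\mathrm{Inc}_{\mathcal{B}'}$, so it is $Z$-computable.
\item[(3)] Consequently the set $V=\{i\mid\forall j<i\ \beta(j)\neq\beta(i)\}$ is $Z$-computable and infinite, and as in Lemma~\ref{lem:injective}(b) we may assume $\beta$ is a bijection.
\item[(4)] Identify $B_i$ with $i$, exactly as in (a)$\Rightarrow$(b) of Theorem~\ref{prop:complexity-presentations}. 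Then $i\le j$ iff $(i,\{j\})\in\mathrm{Inc}$, which is $Z$-computable. The closure is $\varphi=\Gamma_A$ with $A=\{\langle j,k\rangle\mid (j,k)\in\mathrm{Inc},\ D_k\neq\varnothing\}$, and $A\le_T Z$.
\end{enumerate}
So $\mathsf{P}(\mathbb{X})$ has a $Z$-computable presentation.

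\emph{Inclusion $\operatorname{DgSp}(\mathsf{P}(\mathbb{X}))\subseteq\operatorname{DgSp}(\mathbb{X})$.}
\begin{enumerate}
\item[(1)] Let $\mathcal{P}=\langle P;\le,\Gamma_C\rangle$ be $Z$-computable with $C\le_T Z$. By the argument of Lemma~\ref{lem:injective}(a) we take $P=\omega$; the translated set $B$ is obtained from $C$ by a $Z$-computable bijective recoding, so $B\le_T Z$.
\item[(2)] Form $\operatorname{\mathbb{S}pec}\mathcal{P}$ with $\beta(a)=V_a$. By Theorem~\ref{T:Stone} and Theorem~\ref{T:TP} it is $\mathbf{AS}$-isomorphic to $\mathbb{X}$.
\item[(3)] By Lemma~\ref{L:phi}(ii), for $D_k\neq\varnothing$ we have $(i,k)\in\mathrm{Inc}$ iff $i\in\varphi(D_k)$, i.e.\ iff $\langle i,k'\rangle\in C$ for some $\varnothing\ne D_{k'}\subseteq D_k$.
\item[(4)] Only finitely many $k'$ satisfy $D_{k'}\subseteq D_k$, and each query to $C$ is $Z$-decidable. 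Hence this is a bounded search, and $\mathrm{Inc}$ is $Z$-computable.
\end{enumerate}

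\emph{Main obstacle.} The one non-routine point is the bounded-search step (4) of the second inclusion. In Theorem~\ref{prop:complexity-presentations}, enumerability of $C$ sufficed. Here decidability of $\mathrm{Inc}$ depends on deciding $i\in\Gamma_C(D_k)$ uniformly from $Z$. This works because $\Gamma_C$ applied to a finite set only involves the finitely many codes of its subsets, so the check never runs an unbounded search. The other steps are direct adaptations of Lemma~\ref{lem:injective} and Theorem~\ref{prop:complexity-presentations}, with every ``$Z$-c.e.'' replaced by ``$Z$-computable''.
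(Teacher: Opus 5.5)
Your proposal is correct and follows the paper's route. The paper proves Proposition~\ref{prop:deg-sp} only by saying one should repeat the proof of Theorem~\ref{prop:complexity-presentations} with ``$Z$-c.e.'' replaced by ``$Z$-computable'', and you carry out that repetition explicitly. Your two added points are exactly what makes it work: equality of basic sets is decidable from $\mathrm{Inc}$, which replaces the inequality clause missing from Definition~\ref{def:Z-comp-things}, and deciding $i\in\Gamma_C(D_k)$ is a bounded search over the subsets of $D_k$.
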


\noindent
We apply Proposition~\ref{prop:deg-sp} to obtain the following result.

\begin{theorem}\label{theo:degree-spectra}
	Suppose that $\mathcal{S}$ is an automorphically non-trivial, countable $\sigma$-structure. Then there exists a second-countable, almost semispectral space with base $\mathbb{X}_{\mathcal{S}}$ satisfying
	\[
		\operatorname{DgSp}(\mathbb{X}_{\mathcal{S}}) = \operatorname{DgSp}(\mathcal{S}).
	\]
\end{theorem}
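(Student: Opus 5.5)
The plan is to combine Proposition~\ref{prop:HKSS} with Proposition~\ref{prop:deg-sp}. By Proposition~\ref{prop:HKSS}, fix a countable poset $\mathcal{P}_{\mathcal{S}}=(\omega;\leq_{\mathcal{S}})$ with $\operatorname{DgSp}(\mathcal{P}_{\mathcal{S}})=\operatorname{DgSp}(\mathcal{S})$. Turn it into the distributive $c$-poset $\mathcal{Q}=\langle\omega;\leq_{\mathcal{S}},\dnw\rangle$, where $\varphi(A)=\dnw A$ as in the Example after Definition~\ref{DP}. Then put $\mathbb{X}_{\mathcal{S}}=\mathsf{T}(\mathcal{Q})=\operatorname{\mathbb{S}pec}\mathcal{Q}$. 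This space lies in $\mathbf{AS}$ by Theorem~\ref{T:Stone}, and it is second-countable because its base $\{V_a\mid a\in\omega\}$ is countable. Also $\mathsf{P}(\mathbb{X}_{\mathcal{S}})\cong\mathcal{Q}$ in $\mathbf{DP}$ by the duality. Proposition~\ref{prop:deg-sp} then gives $\operatorname{DgSp}(\mathbb{X}_{\mathcal{S}})=\operatorname{DgSp}(\mathcal{Q})$, since the $c$-poset degree spectrum is a $\mathbf{DP}$-isomorphism invariant. It therefore remains to show $\operatorname{DgSp}(\mathcal{Q})=\operatorname{DgSp}(\mathcal{P}_{\mathcal{S}})$, where the right-hand side is the usual structural spectrum.

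For the inclusion $\operatorname{DgSp}(\mathcal{P}_{\mathcal{S}})\subseteq\operatorname{DgSp}(\mathcal{Q})$, take a $\mathbf{d}$-computable copy $(\omega;\leq')$ of the poset with isomorphism $g$. Then $\langle\omega;\leq',\dnw\rangle$ is $\mathbf{DP}$-isomorphic to $\mathcal{Q}$ via $g$. The map $g$ is an order isomorphism that transports lower cones, and strictness follows because prime ideals are defined from $\leq$ and $\varphi$ alone. We have $\dnw=\Gamma_A$ with
\[
A=\{\langle x,k\rangle\mid D_k\neq\varnothing,\ \exists y\in D_k\ (x\leq' y)\},
\]
and this $A$ is computable from $\leq'$, hence $A\leq_T\mathbf{d}$.

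For the reverse inclusion, take a $\mathbf{d}$-computable $c$-poset $\mathcal{R}=\langle R;\leq_R,\Gamma_A\rangle$ that is $\mathbf{DP}$-isomorphic to $\mathcal{Q}$. By Lemma~\ref{L:ds}, applied to the isomorphism into $\mathcal{Q}$, the closure of $\mathcal{R}$ is again $\dnw$. By \cite[Lemma 1.8(i)]{S1}, as used in that proof, the poset reducts are isomorphic. So $\langle R;\leq_R\rangle$ is a copy of $\mathcal{P}_{\mathcal{S}}$ with $\leq_R$ computable from $\mathbf{d}$ and $R$ infinite and $\mathbf{d}$-computable. Transferring along a $\mathbf{d}$-computable bijection $\omega\to R$, as in Lemma~\ref{lem:injective}(a), gives a copy with domain $\omega$ whose atomic diagram is $\mathbf{d}$-computable. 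Its degree is $\leq\mathbf{d}$ and lies in $\operatorname{DgSp}(\mathcal{P}_{\mathcal{S}})$.

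The main obstacle is upward closure. This last step only yields some degree $\mathbf{c}\leq\mathbf{d}$ in the spectrum, so we need $\operatorname{DgSp}(\mathcal{P}_{\mathcal{S}})$ to be closed upwards. By Knight's theorem this holds provided $\mathcal{P}_{\mathcal{S}}$ is automorphically nontrivial. That follows from $\operatorname{DgSp}(\mathcal{P}_{\mathcal{S}})=\operatorname{DgSp}(\mathcal{S})$ and the fact that $\operatorname{DgSp}(\mathcal{S})$ is upward closed and not a singleton. Alternatively, one can use the description (\ref{equ:DgSp}) directly, since both spectra are then ``has a $\mathbf{d}$-computable copy'' sets. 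One must also check that Definition~\ref{def:spectra-of-spaces}(a) is indeed invariant under $\mathbf{DP}$-isomorphism, so that the passage through $\mathsf{P}(\mathbb{X}_{\mathcal{S}})\cong\mathcal{Q}$ is legitimate.
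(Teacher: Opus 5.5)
Your proposal is correct and follows the paper's proof: you replace $\mathcal{S}$ by the poset from Proposition~\ref{prop:HKSS}, equip it with the closure $\dnw$ (your set $A$ defines the same operator as the paper's), take $\mathbb{X}_{\mathcal{S}}$ to be $\mathsf{T}$ of the resulting $c$-poset, and apply Proposition~\ref{prop:deg-sp}. You also supply the details the paper leaves as ``not hard to deduce'' (isomorphic poset reducts via \cite[Lemma 1.8(i)]{S1}, and upward closure); the upward-closure step is simpler than you make it, because $\operatorname{DgSp}(\mathcal{P}_{\mathcal{S}})=\operatorname{DgSp}(\mathcal{S})$ is already upward closed by Knight's theorem applied to $\mathcal{S}$, so you need no separate nontriviality argument for $\mathcal{P}_{\mathcal{S}}$.
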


\begin{proof}
	By Proposition~\ref{prop:HKSS}, we may assume that $\mathcal{S} = \langle \omega; \leq_{\mathcal{S}}\rangle$ is a poset. As in Example~1 of the paper~\cite{S1}, we define the closure operator
	\[
		\varphi(X) = \ \downarrow\! X = \bigl\{ a\mid (\exists b\in X) (a\leq_{\mathcal{S}} b)\bigr\}.
	\]
	Then $\mathcal{P}_{\mathcal{S}} = \langle \omega; \leq_{\mathcal{S}}, \varphi\rangle$ is a distributive $c$-poset. Notice that $\varphi$ equals to the generalized enumeration operator $\Gamma_A$, where
	\[
		A = \bigl\{ \langle a, k\rangle\mid D_k = \{b\},\ a\leq_{\mathcal{S}} b\bigr\}.
	\]
	Here $A$ is computable with respect to the oracle $\leq_{\mathcal{S}}$. By applying this observation, it is not hard to deduce that 
	\[
		\operatorname{DgSp}(\mathcal{P}_{\mathcal{S}}) = \operatorname{DgSp}(\mathcal{S}).
	\]
	We choose $\mathbb{X}_{\mathcal{S}}$ as the dual space $\mathsf{T}(\mathcal{P}_{\mathcal{S}})$. By Proposition~\ref{prop:deg-sp}, we deduce that $\operatorname{DgSp}(\mathbb{X}_{\mathcal{S}}) = \operatorname{DgSp}(\mathcal{S})$.
\end{proof}

\medskip
We note that in the recent years, degree spectra for Polish spaces \emph{up to homeomorphism} have been extensively studied~--- see, e.g., the papers~\cite{HMN-20,HKS-ta,MN-23,Mel-21,DM-23}. To our best knowledge, it is still unknown whether one can establish an analogue of Theorem~\ref{theo:degree-spectra} for this kind of degree spectra.

\medskip

\section{Computable versions of dualities II: Subcategories and morphisms}\label{sect:semilat}

\noindent
In this section, following~\cite{S1,S2}, we focus on some familiar subcategories of the category $\mathbf{DP}$. 
Subsection~\ref{subsect:lat} discusses semilattices and lattices. In computable structure theory, there is a well-established notion of, say, a computable join-semilattice or a computable lattice, so we give appropriate effective notions of topological spaces with base taken from the corresponding subcategories of $\mathbf{AS}$ (see Definition~\ref{def:comp-space-for-lat}). Similarly to Section~\ref{sect:gen-obj}, we establish results on duality and degree spectra (Theorem~\ref{theo:semi_lattices} and Corollary~\ref{corol:lattices-spectra}). 

In Subsection~\ref{subsect:morphisms}, we discuss the complexity of morphisms. We introduce the notion of effective spectral map (Definition~\ref{def:Z-comp-spectral}). Using the known results from computable structure theory, for a natural number $N\geq 1$, we build a computable topological space with base which has precisely $N$-many computable copies, up to effective spectral homeomorphisms (Corollary~\ref{cor:comp-dim}).

\subsection{Semilattices and lattices}\label{subsect:lat}
{\ }

Let $Z\subseteq \omega$, and let $f$ be a binary operation. As customary in computable structure theory, a structure $\langle P; f\rangle$ is called \emph{$Z$-computable} if $P$ is a $Z$-computable subset of $\omega$ and the function $f$ is $Z$-computable. 

The notion introduced above allows us to talk about $Z$-computable join-semilattices $\langle P; \vee\rangle$ and $Z$-computable meet-semilattices $\langle P; \wedge\rangle$. A lattice $\langle P; \vee,\wedge \rangle$ is $Z$-computable if both its semilattice reducts $\langle P; \vee\rangle$ and $\langle P; \wedge\rangle$ are $Z$-computable.

Similarly to Lemma~\ref{lem:injective}, one can prove the following result:

\begin{lemma}[folklore]\label{lem:folk-lattices}
	If $\langle P; \vee_{\mathcal{P}},\wedge_{\mathcal{P}}\rangle$ is a $Z$-computable structure $($which is not necessarily a lattice$)$ and $P$ is an infinite set, then $\langle P; \vee_{\mathcal{P}},\wedge_{\mathcal{P}}\rangle$ is isomorphic to a $Z$-computable structure of the form $\langle \omega; \vee,\wedge\rangle$. 
\end{lemma}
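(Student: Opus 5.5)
We transfer the structure along a $Z$-computable bijection between $\omega$ and $P$, exactly as in the proof of Lemma~\ref{lem:injective}(a).

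First, since $P$ is an infinite $Z$-computable subset of $\omega$, the increasing enumeration $f\colon\omega\to P$ is a $Z$-computable bijection. Concretely, $f(n)$ is the $n$-th element of $P$ in the natural order of $\omega$, found by searching upward with oracle queries to $P$; the search always terminates because $P$ is infinite. The inverse $f^{-1}\colon P\to\omega$ is also $Z$-computable: given $x\in P$, count the elements of $P$ below $x$.

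Next, define operations on $\omega$ by pulling back:
\[
i\vee j=f^{-1}\bigl(f(i)\vee_{\mathcal{P}}f(j)\bigr),\qquad i\wedge j=f^{-1}\bigl(f(i)\wedge_{\mathcal{P}}f(j)\bigr).
\]
These are well defined because $\vee_{\mathcal{P}}$ and $\wedge_{\mathcal{P}}$ are total operations on $P$, so their values lie in $P=\operatorname{ran} f$. Each is a composition of $Z$-computable functions, hence $Z$-computable. By construction $f$ preserves both operations, so $f$ is an isomorphism from $\langle\omega;\vee,\wedge\rangle$ onto $\langle P;\vee_{\mathcal{P}},\wedge_{\mathcal{P}}\rangle$. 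No lattice axioms are used anywhere, which matches the parenthetical in the statement.

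The only point needing care is making sure that $f$ and $f^{-1}$ are $Z$-computable and total on the relevant domains. This follows from $P$ being infinite and $Z$-computable. There is no real obstacle.
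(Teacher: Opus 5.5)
Your proposal is correct and follows the paper's intended route: the paper proves this lemma only by pointing to the proof of Lemma~\ref{lem:injective}(a). That proof pulls the structure back along a $Z$-computable bijection $f\colon\omega\to P$, exactly as you do. Your explicit check that the increasing enumeration $f$ and its inverse on $P$ are $Z$-computable fills in the one detail the paper leaves implicit.
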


\medskip

We recall the following full subcategories of $\mathbf{DP}$ and their dual full subcategories of $\mathbf{AS}$ (see further details in Appendix).
\begin{itemize}
	\item $\mathbf{DSL}^{\wedge}$ (distributive meet-semilattices)\ $\mapsto$\ $\mathbf{ASp}$ (almost spectral spaces with base),
	
	\item $\mathbf{DSL}^{\vee}$ (distributive join-semilattices)\ $\mapsto$\  $\mathbf{AsSpec}$ (almost semispectral spaces with additive base),
	
	\item $\mathbf{DL}$ (distributive lattices)\ $\mapsto$\  $\mathbf{ASpec}$ (almost spectral spaces).
\end{itemize}

For objects from the full subcategories $\mathbf{ASp}$, $\mathbf{AsSpec}$, and $\mathbf{ASpec}$, we introduce the following notion of effective presentability:

\begin{definition}\label{def:comp-space-for-lat}
	Suppose that $\mathbb{X}=\langle X,\mathcal{T},\mathcal{B}\rangle$ is a topological space with base such that the base $\mathcal{B}$ is  countable. Let $\beta$ be a bijection from $\omega$ onto $\mathcal{B}$. 
	\begin{itemize}
		\item[(a)] Suppose that the space $\mathbb{X}$ is an object from $\mathbf{ASp}$. We say that $\langle X,\mathcal{T},\mathcal{B},\beta\rangle$ is a \emph{$Z$-computable $\mathbf{ASp}$-space} if there exists a computable function $f_{\wedge}(x,y)$ such that all $i,j\in\omega$ satisfy $\beta(i) \cap \beta(j) = \beta(f_{\wedge}(i,j))$.
		
		\item[(b)] Suppose that the space $\mathbb{X}$ is an object from $\mathbf{AsSpec}$. We say that $\langle X,\mathcal{T},\mathcal{B},\beta\rangle$ is a \emph{$Z$-computable $\mathbf{AsSpec}$-space} if there exists a computable function $f_{\vee}(x,y)$ such that all $i,j\in\omega$ satisfy $\beta(i) \cup \beta(j) = \beta(f_{\vee}(i,j))$.
		
		\item[(c)] Suppose that the space $\mathbb{X}$ is an object from $\mathbf{ASpec}$. We say that $\langle X,\mathcal{T},\mathcal{B},\beta\rangle$ is a \emph{$Z$-computable $\mathbf{ASpec}$-space} if $\langle X,\mathcal{T},\mathcal{B},\beta\rangle$ is both a $Z$-computable $\mathbf{ASp}$-space and a $Z$-computable $\mathbf{AsSpec}$-space.
	\end{itemize}
\end{definition}

\noindent 
In what follows, sometimes we will write $B_i$ in place of $\beta(i)$.

\medskip 

We give a computable version of dualities (on objects):

\begin{theorem}\label{theo:semi_lattices}
	For an oracle $Z\subseteq \omega$ and an $\mathbf{ASp}$-space $\mathbb{X}=\langle X,\mathcal{T},\mathcal{B}\rangle$, the following conditions are equivalent:
	\begin{itemize}
		\item[(a)] the space $\mathbb{X}$ is $\mathbf{AS}$-isomorphic to a $Z$-computable $\mathbf{ASp}$-space,
		
		\item[(b)] the dual meet-semilattice $\langle\mathcal{B}(\mathbb{X});\cap\rangle$ is $\mathbf{DP}$-isomorphic to a $Z$-computable meet-semilattice.
	\end{itemize}
	Similar dualities are true for:
	\begin{itemize}
		\item[(i)] $Z$-computable $\mathbf{AsSpec}$-spaces and $Z$-computable join semilattices,
		
		\item[(ii)] $Z$-computable $\mathbf{ASpec}$-spaces and $Z$-computable lattices.
	\end{itemize}
\end{theorem}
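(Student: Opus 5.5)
The plan follows the proof of Theorem~\ref{prop:complexity-presentations}. We identify basic sets with their indices and use Lemma~\ref{L:phi} to translate between set-theoretic operations on the base and lattice operations on the dual $c$-poset. For the meet-semilattice case, the key fact is that for an $\mathbf{ASp}$-space the base $\mathcal{B}$ is closed under finite nonempty intersections. Hence the dual $c$-poset $\mathsf{P}(\mathbb{X})=\langle\mathcal{B};\subseteq,\varphi_\mathcal{B}\rangle$ is a meet-semilattice with $\wedge=\cap$. Conversely, for a distributive meet-semilattice $\mathcal{P}$, Lemma~\ref{L:phi}(iv) gives $V_a\cap V_b=V_{a\wedge b}$ in $\operatorname{\mathbb{S}pec}\mathcal{P}$. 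We also use that a $\mathbf{DP}$-isomorphism of $c$-posets is in particular a poset isomorphism (as in the proof of Lemma~\ref{L:ds}), so it preserves meets and joins whenever they exist.

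For (a)$\Rightarrow$(b), let $\langle X,\mathcal{T},\mathcal{B},\beta\rangle$ be a $Z$-computable $\mathbf{ASp}$-space with bijective $\beta$, and let $f_\wedge$ be the witnessing function, read as $Z$-computable. Define the structure $\langle\omega;\wedge\rangle$ by $i\wedge j=f_\wedge(i,j)$. Then $\beta$ is an isomorphism onto $\langle\mathcal{B};\cap\rangle$, so this meet-semilattice is $Z$-computable. Transporting along the $\mathbf{AS}$-isomorphism via the functor $\mathsf{P}$ gives a $\mathbf{DP}$-isomorphism with the dual of $\mathbb{X}$.

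For (b)$\Rightarrow$(a), take a $Z$-computable meet-semilattice $\langle\omega;\wedge\rangle$, using Lemma~\ref{lem:folk-lattices} in the infinite case; the finite case is trivial. Its dual $\mathsf{T}$-space is $\operatorname{\mathbb{S}pec}\mathcal{P}$, where $\varphi$ is the $c$-poset closure making it $\mathbf{DP}$-isomorphic to $\langle\mathcal{B};\subseteq,\varphi_\mathcal{B}\rangle$. Put $\beta(a)=V_a$. This map is injective by Lemma~\ref{L:phi}(iii), and Lemma~\ref{L:phi}(iv) shows that $f_\wedge(a,b)=a\wedge b$ witnesses $Z$-computability. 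By Theorem~\ref{T:Stone} and Theorem~\ref{T:TP}, $\mathbb{X}\cong\mathsf{T}\mathsf{P}(\mathbb{X})\cong\operatorname{\mathbb{S}pec}\mathcal{P}$ in $\mathbf{AS}$.

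Cases (i) and (ii) run the same way, with Lemma~\ref{L:phi}(v) handling unions. Here we need the hypothesis $a\vee b\in\varphi(a,b)$. For join-semilattices with $\psi$ the semilattice-ideal closure (Definition~\ref{D:SpL}) this holds by definition. In the dual direction it holds because $\beta(i)\cup\beta(j)\in\mathcal{B}$ lies in $\varphi_\mathcal{B}(\{\beta(i),\beta(j)\})$.

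I expect the main obstacle to be this point: checking that the $c$-poset closure on the abstract $Z$-computable join-semilattice really is the canonical $\psi$. The issue is that $\mathbf{DP}$-isomorphism must carry $\varphi_\mathcal{B}$ to $\psi$. I would argue it from the appendix description of $\mathbf{DSL}^{\vee}$, where objects carry $\psi$, together with the isomorphism preserving closures. Everything else is routine transport of structure along isomorphisms.
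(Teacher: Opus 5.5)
Your proposal is correct and is essentially the paper's proof: for (a)$\Rightarrow$(b) you take basic sets as indices with $f_\wedge$, $f_\vee$ as the operations, and for (b)$\Rightarrow$(a) you use $\operatorname{\mathbb{S}pec}\mathcal{P}$ with $\beta(a)=V_a$ and Lemma~\ref{L:phi}(iii)--(v), written with more care than the paper about which closure $\mathcal{P}$ carries. The obstacle you anticipate is not real, since Lemma~\ref{L:phi}(v) needs only $a\vee b\in\varphi(a,b)$, which follows from your own observation $U\cup V\in\varphi_{\mathcal{B}}(\{U,V\})$ by transport along the closure-preserving order isomorphism (as in the proof of Lemma~\ref{L:ds}); and the finite case is not ``trivial'' but has to be excluded, because Definition~\ref{def:comp-space-for-lat} demands a bijection $\beta\colon\omega\to\mathcal{B}$, which is what the paper tacitly assumes.
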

\begin{proof}
	We give a detailed proof only for $\mathbf{ASp}$-spaces and meet-semilattices. After the main proof, we provide a brief comment on how to establish the other two dualities.
	
	\underline{(a)$\Longrightarrow$(b).} Let $\langle X,\mathcal{T}, (B_i)_{i\in\omega} \rangle$ be a $Z$-computable $\mathbf{ASp}$-space. Consider the dual meet-semilattice $\mathcal{M} = \langle \{ B_i: i\in\omega\}; \cap\rangle$. If we identify a basic set $B_i$ with its index $i$, then we may assume that $\mathcal{M}$ has form $\langle \omega; f_{\wedge}\rangle$. Hence, $\mathcal{M}$ is isomorphic to the $Z$-computable meet-semilattice $\langle \omega; f_{\wedge}\rangle$.
	
	\underline{(b)$\Longrightarrow$(a).} By Lemma~\ref{lem:folk-lattices}, we may assume that the dual meet-semilattice $\langle\mathcal{B}(\mathbb{X});\cap\rangle$ is isomorphic to a $Z$-computable meet-semilattice $\mathcal{P} = \langle \omega; \wedge\rangle$.
	
	We consider the dual space $\operatorname{\mathbb{S}pec}\mathcal{P} = \langle\operatorname{Spec}\mathcal{P},\mathcal{T}_{\mathcal{P}}, \mathcal{B}_{\mathcal{P}}\rangle$, where $\mathcal{B}_{\mathcal{P}} = \{ V_a\mid a\in \omega\}$ 
	and
	\[
		V_a = \{ I \in \operatorname{Spec}\mathcal{P}\mid a\not\in I\}.
	\]
	For $a\in\omega$, we put $\beta(a) = V_a$. Recall that by Lemma \ref{L:phi}(iii), we have $\beta(a) \neq \beta(b)$ for all $a\neq b$.
	
	By Lemma~\ref{L:phi}(iv) we have 
	\[
		V_a \cap V_b = V_c \ \Longleftrightarrow\ a\wedge b = c.
	\]
	Thus, the $Z$-computable function $\wedge$ witnesses that the space $\operatorname{\mathbb{S}pec}\mathcal{P}$ is isomorphic to a $Z$-computable $\mathbf{ASp}$-space.
	
	\smallskip
	
	Now we give comments on the further two effective dualities:
	
	\textbf{(i)} The proof of Claim~(i) proceeds in a similar way to the main proof, except that in the proof of (b)$\Longrightarrow$(a) we need to apply Lemma~\ref{L:phi}(v) in place of Lemma~\ref{L:phi}(iv).
	
	\textbf{(ii)} Claim~(ii) is an immediate corollary of the main proof and Claim~(i).
\end{proof}

\medskip

Now we give some corollaries of Theorem~\ref{theo:semi_lattices}.
Similarly to Definition~\ref{def:spectra-of-spaces}, we introduce the following notion:

\begin{definition}
	Suppose that $\mathbf{S}$ is one of the following full subcategories of $\mathbf{AS}$: $\mathbf{ASp}$, $\mathbf{AsSpec}$, $\mathbf{ASpec}$. The \emph{degree spectrum} of a second-countable $\mathbf{S}$-space $\mathbb{X}=\langle X,\mathcal{T},\mathcal{B}\rangle$ is the set of all Turing degrees $\mathbf{d}$ such that $\mathbb{X}$ is isomorphic to a $\mathbf{d}$-computable $\mathbf{S}$-space.
\end{definition}

Theorem~\ref{theo:semi_lattices} implies the following:

\begin{corollary}\label{corol:lattices-spectra}
	Let $\mathcal{M}$ be an automorphically non-trivial, countable distributive meet-semilattice. Then for the second-countable $\mathbf{ASp}$-space $\operatorname{\mathbb{S}pec}\mathcal{M}$, its degree spectrum is equal to the degree spectrum of $\mathcal{M}$.
	
	A similar result is true for:
	\begin{itemize}
		\item distributive join-semilattices  $\mathcal{J}$ and $\mathbf{AsSpec}$-spaces $\operatorname{\mathbb{S}pec} \mathcal{J}$,
		
		\item distributive lattices  $\mathcal{L}$ and $\mathbf{ASpec}$-spaces $\operatorname{\mathbb{S}pec} \mathcal{L}$.
	\end{itemize}
\end{corollary}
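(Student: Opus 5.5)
The natural route is to derive Corollary~\ref{corol:lattices-spectra} from Theorem~\ref{theo:semi_lattices}, applied oracle by oracle, together with Knight's theorem in the form~(\ref{equ:DgSp}). I treat the meet-semilattice case in detail. The join-semilattice and lattice cases then follow verbatim, using Claims~(i) and~(ii) of Theorem~\ref{theo:semi_lattices} in place of the main equivalence.

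\emph{Step 1: set up the duality.} Put $\mathbb{X}=\operatorname{\mathbb{S}pec}\mathcal{M}$. By Theorem~\ref{T:Stone} and the dualities recalled for $\mathbf{DSL}^{\wedge}$ and $\mathbf{ASp}$, $\mathbb{X}$ is an $\mathbf{ASp}$-space. Its dual $\langle\mathcal{B}(\mathbb{X});\cap\rangle=\langle\{V_a\mid a\in\mathcal{M}\};\cap\rangle$ is isomorphic to $\mathcal{M}$ via $a\mapsto V_a$. This map is injective by Lemma~\ref{L:phi}(iii), and $V_a\cap V_b=V_{a\wedge b}$ by Lemma~\ref{L:phi}(iv). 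I will also check that $\mathbf{DP}$-isomorphism of meet-semilattices coincides with ordinary semilattice isomorphism. This should follow from \cite[Lemma 1.8(i)]{S1}, as used in the proof of Lemma~\ref{L:ds}, since an order isomorphism preserves meets. It also needs the fact that the closure operator of a distributive meet-semilattice in $\mathbf{DSL}^{\wedge}$ is determined by its order.

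\emph{Step 2: reduce both spectra to a common set of degrees.} Fix a Turing degree $\mathbf{d}$ and $Z\in\mathbf{d}$. By Theorem~\ref{theo:semi_lattices}, $\mathbb{X}$ is isomorphic to a $Z$-computable $\mathbf{ASp}$-space exactly when $\mathcal{M}$ has a $Z$-computable copy. Hence $\operatorname{DgSp}(\mathbb{X})$ is precisely the set of degrees $\mathbf{d}$ such that $\mathcal{M}$ has a $\mathbf{d}$-computable copy.

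\emph{Step 3: compare with $\operatorname{DgSp}(\mathcal{M})$.} A $\mathbf{d}$-computable copy may have a domain that is merely a $Z$-computable subset of $\omega$. Since $\mathcal{M}$ is infinite (automorphic non-triviality excludes finite structures), Lemma~\ref{lem:folk-lattices} moves it to domain $\omega$. Because $\mathcal{M}$ is automorphically non-trivial, Knight's theorem~(\ref{equ:DgSp}) shows that this set equals $\operatorname{DgSp}(\mathcal{M})$, which is the set of degrees of atomic diagrams. The slight gap is that the diagram of a $\mathbf{d}$-computable copy has degree $\le\mathbf{d}$ rather than exactly $\mathbf{d}$; upward closure handles this.

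\emph{Main obstacle.} The delicate point is the definitional mismatch in Definition~\ref{def:comp-space-for-lat}, which speaks of a ``computable'' function $f_\wedge$. I read this as $Z$-computable, and in Step~2 I will make explicit that this is the reading under which Theorem~\ref{theo:semi_lattices} holds.
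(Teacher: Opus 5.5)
Your proposal is correct and follows the paper's route. The paper states the corollary as an immediate consequence of Theorem~\ref{theo:semi_lattices}: apply that theorem for each oracle $Z\in\mathbf{d}$, then turn ``$\mathcal{M}$ has a $\mathbf{d}$-computable copy'' into $\mathbf{d}\in\operatorname{DgSp}(\mathcal{M})$ using Knight's theorem in the form (\ref{equ:DgSp}), with Lemma~\ref{lem:folk-lattices} taking care of the domain.

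One point in Step~1 should change. You do not need the claim that the closure operator is determined by the order, and you should not rely on it. It fails for distributive $c$-posets in general: the four-element Boolean lattice with $\varphi(A)=\dnw A$ has a three-point spectrum, while with the closure inherited from the discrete two-point space it has a two-point one. Since only $\wedge$ has to be $Z$-computable, simply transport the closure of $\mathcal{M}$ along the semilattice isomorphism onto its $Z$-computable copy; this makes that isomorphism a $\mathbf{DP}$-isomorphism.
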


Theorem~1.1 of~\cite{BFKM} established that there exists an automorphically non-trivial, distributive lattice $\mathcal{L}$ such that $\mathcal{L}$ has the Slaman--Wehner degree spectrum~\cite{Slaman,Wehner}, that is, $\operatorname{DgSp}(\mathcal{L}) = \{ \mathbf{d} \mid \mathbf{d} \text{ is non-computable}\}$. Thus, by applying Corollary~\ref{corol:lattices-spectra}, we obtain the following fact.

\begin{corollary}\label{corol:Slaman-Wehner}
	There exists a second-countable $\mathbf{ASpec}$-space such that its degree spectrum is equal to the set of all non-computable Turing degrees.
\end{corollary}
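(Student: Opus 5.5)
The plan is to combine Corollary~\ref{corol:lattices-spectra} with Theorem~1.1 of~\cite{BFKM}. Let $\mathcal{L}$ be the automorphically non-trivial distributive lattice from~\cite{BFKM}, so $\operatorname{DgSp}(\mathcal{L})=\{\mathbf{d}\mid\mathbf{d}\ \text{non-computable}\}$. I will take $\mathbb{X}=\operatorname{\mathbb{S}pec}\mathcal{L}$ and show that it is the required space.

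First, I check that $\mathbb{X}$ is a second-countable $\mathbf{ASpec}$-space. The lattice $\mathcal{L}$ is countable, since degree spectra are defined only for countable structures. Its join-semilattice reduct with the ideal closure $\psi$ is a distributive $c$-poset lying in $\mathbf{DL}$. Under the duality $\mathbf{DL}\leftrightarrow\mathbf{ASpec}$ recalled before Definition~\ref{def:comp-space-for-lat}, $\mathsf{T}(\mathcal{L})=\operatorname{\mathbb{S}pec}\mathcal{L}$ is an $\mathbf{ASpec}$-space. Its base $\{V_a\mid a\in L\}$ is countable, so the space is second-countable.

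Next, I apply the lattice clause of Corollary~\ref{corol:lattices-spectra}, which gives $\operatorname{DgSp}(\operatorname{\mathbb{S}pec}\mathcal{L})=\operatorname{DgSp}(\mathcal{L})$. Concretely, Theorem~\ref{theo:semi_lattices}(ii) says that for each degree $\mathbf{d}$, $\mathbb{X}$ has a $\mathbf{d}$-computable $\mathbf{ASpec}$-copy if and only if $\mathcal{L}$ has a $\mathbf{d}$-computable copy. By Knight's theorem, the right-hand condition characterises $\operatorname{DgSp}(\mathcal{L})$ via~(\ref{equ:DgSp}), because $\mathcal{L}$ is automorphically non-trivial.

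The only point needing care is this last identification. In Theorem~\ref{theo:semi_lattices}, the lattice side is phrased as ``$\mathbf{DP}$-isomorphic to a $Z$-computable lattice'', whereas $\operatorname{DgSp}(\mathcal{L})$ refers to ordinary lattice isomorphism. Lattices with the ideal closure operator form a full subcategory, so I expect these to coincide: a $\mathbf{DP}$-isomorphism is an order isomorphism by~\cite[Lemma 1.8(i)]{S1}, hence a lattice isomorphism. I will take this as given, as Corollary~\ref{corol:lattices-spectra} already does. Substituting the Slaman--Wehner spectrum then yields the claim.
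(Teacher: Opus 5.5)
Your proposal is correct and is exactly the paper's argument: take the automorphically non-trivial distributive lattice $\mathcal{L}$ from Theorem~1.1 of~\cite{BFKM} with the Slaman--Wehner spectrum, and apply the lattice clause of Corollary~\ref{corol:lattices-spectra} to $\operatorname{\mathbb{S}pec}\mathcal{L}$. Your extra checks only make explicit what the paper leaves implicit: second-countability, membership in $\mathbf{ASpec}$, Knight's theorem, and the identification of $\mathbf{DP}$-isomorphism with lattice isomorphism via~\cite[Lemma 1.8(i)]{S1}.
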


Recall that a Stone space is a compact and totally disconnected Hausdorff space. It is well-known that the category of Stone spaces is dually equivalent to the category of Boolean algebras.

We note that Corollary~\ref{corol:Slaman-Wehner} contrasts with the known results on degree spectra (up to homeomorphism) of Stone spaces. Indeed, the degree spectrum of a separable Stone space $\mathbb{S}$ is equal to the degree spectrum of the corresponding dual Boolean algebra $\mathcal{B}$ (\cite[Theorem~1.1]{HMN-20} and \cite[Corollary~3.4]{HKS-ta}). It is known that for a countable Boolean algebra $\mathcal{B}$, if the degree spectrum $\operatorname{DgSp}(\mathcal{B})$ contains a $\operatorname{low}_4$ Turing degree, then $\operatorname{DgSp}(\mathcal{B})$ must contain the computable degree \cite{KnightStob}. Therefore, a Stone space $\mathbb{S}$ cannot have degree spectrum equal to the set of all non-computable degrees.

\medskip

\subsection{Complexity of morphisms}\label{subsect:morphisms}
{\ }

For simplicity, in this subsection we work with computably enumerable presentations of $c$-posets and of topological spaces with bases. We note that the discussed material allows a straightforward adaptation to the case of $Y$-c.e.\ and $Y$-computable presentations, for $Y\subseteq \omega$.

\begin{definition}\label{def:Z-comp-spectral}
Let $\mathbb{X}_0 = \langle X_0, \mathcal{T}_0, (B_i)_{i\in\omega}\rangle$ and $\mathbb{X}_1 = \langle X_1, \mathcal{T}_1, (C_i)_{i\in\omega}\rangle$ be c.e.\ topological spaces with base, and let $Z\subseteq \omega$. We say that a spectral map $f\colon X_0 \to X_1$ is \emph{$Z$-effective} if there exists a $Z$-computable function $h\colon \omega \to \omega$ such that $f^{-1}(C_i) = B_{h(i)}$ for all $i\in\omega$.
\end{definition}

\noindent
We say that a $\mathbf{DP}$-morphism $g$ is $Z$-computable if $g$ is a partial $Z$-computable function, in the usual recursion-theoretic sense. (In order for this notion to be formally correct, sometimes we have to identify a basic set $B_i$ with its index $i\in\omega$.)

We establish the following result on the complexity of morphisms.

\begin{lemma}\label{lem:morphisms}
For an oracle $Z\subseteq\omega$, the following statements hold.
\begin{enumerate}
\item[{\rm (a)}]
Let $\mathbb{X}_0$ and $\mathbb{X}_1$ be c.e.\ spaces from $\mathbf{AS}$. If $f\colon \mathbb{X}_0 \to \mathbb{X}_1$ is a $Z$-effective spectral map, then $g = \mathsf{P}(f)$ is a $Z$-computable $\mathbf{DP}$-morphism from $\mathsf{P}(\mathbb{X}_1)$ to $\mathsf{P}(\mathbb{X}_0)$.
\item[{\rm (b)}]
Let $\mathcal{P}_0$ and $\mathcal{P}_1$ be c.e.\ $c$-posets from $\mathbf{DP}$. If $g\colon \mathcal{P}_0 \to \mathcal{P}_1$ is a $Z$-computable $\mathbf{DP}$-morphism, then $f = \mathsf{T}(g)$ is a $Z$-effective spectral map from $\mathsf{T}(\mathcal{P}_1)$ to $\mathsf{T}(\mathcal{P}_0)$.
\end{enumerate}
\end{lemma}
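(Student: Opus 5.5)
Both parts amount to unwinding the definitions through the identifications the paper already uses: a basic set $B_i$ is identified with its index $i$ (as in the proof of Theorem~\ref{prop:complexity-presentations}), and $\mathsf{T}(\mathcal{P})=\operatorname{\mathbb{S}pec}\mathcal{P}$ carries the canonical enumeration $\beta(a)=V_a$. In both directions the computable function we need is essentially the given one, so the work is checking that the functors send it to the right object.

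For (a), write $\mathbb{X}_0=\langle X_0,\mathcal{T}_0,(B_i)_{i\in\omega}\rangle$ and $\mathbb{X}_1=\langle X_1,\mathcal{T}_1,(C_i)_{i\in\omega}\rangle$, and let $h$ be the $Z$-computable function witnessing $f^{-1}(C_i)=B_{h(i)}$. By definition of the functor, $\mathsf{P}(f)\colon C_i\mapsto f^{-1}(C_i)=B_{h(i)}$. Under the index identification of the proof of Theorem~\ref{prop:complexity-presentations}, $g=\mathsf{P}(f)$ is therefore the map $i\mapsto h(i)$, which is $Z$-computable. It is a $\mathbf{DP}$-morphism because $\mathsf{P}$ is a functor (Theorem~\ref{T:Stone}).

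One point needs care here: whether $g$ is well defined on elements of the $c$-poset, since indices may repeat a basic set. By Lemma~\ref{lem:injective}(b) we may pass to injective enumerations, but that reindexing must not destroy computability. The reindexing $\tilde\beta=\beta\circ f$ there uses a bijection onto $V=\{i\mid \forall j<i\ \beta(j)\ne\beta(i)\}$, and $V$ is computable only if equality of indices is decidable. I would therefore either accept $g$ as a map on indices that respects equality of the sets named, which suffices for ``partial $Z$-computable function'' under the identification, or note that $h$ composed with the canonical representative map is computable in $Z\oplus\emptyset'$. I expect this bookkeeping to be the main obstacle, and I would resolve it by working with indices, as the paper's parenthetical remark suggests.

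For (b), let $\mathcal{P}_0=\langle\omega;\leq_0,\varphi_0\rangle$ and $\mathcal{P}_1=\langle\omega;\leq_1,\varphi_1\rangle$ be c.e., and let $g$ be a $Z$-computable $\mathbf{DP}$-morphism. The map $f=\mathsf{T}(g)$ sends $I\mapsto g^{-1}(I)$. By the proof of Theorem~\ref{prop:complexity-presentations}, $\mathsf{T}(\mathcal{P}_j)$ is a c.e.\ space with base enumerated by $\beta_j(a)=V_a$. For $a\in P_0$ we compute
\[
f^{-1}(V_a)=\{I\in\operatorname{Spec}\mathcal{P}_1\mid a\notin g^{-1}(I)\}=\{I\mid g(a)\notin I\}=V_{g(a)}.
\]
Hence $h=g$ witnesses that $f$ is $Z$-effective. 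Finally, $f$ is spectral by Theorem~\ref{T:Stone}.
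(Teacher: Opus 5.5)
Your proposal is correct and follows the paper's proof: in (a) the witness $h$ is itself $g=\mathsf{P}(f)$ once basic sets are identified with their indices, and in (b) $f^{-1}(V_a)=V_{g(a)}$, which you verify directly where the paper cites Claim~5.1 of \cite{S1}. Your well-definedness worry in (a) is unnecessary: for a c.e.\ space with base, inequality of indices is c.e.\ by definition, and inclusion $\beta(i)\subseteq\beta(j)$ is c.e.\ via $\mathrm{Inc}_{\mathcal{B}}$ with $D_k=\{j\}$, so equality of indices is decidable, the reindexing of Lemma~\ref{lem:injective}(b) keeps the witness $Z$-computable, and the $Z\oplus\emptyset'$ fallback (which would not give the required bound anyway) is never needed.
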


\begin{proof}
	(a)\ Suppose that $\mathbb{X}_0 = \langle X_0, \mathcal{T}_0, (B_i)_{i\in\omega}\rangle$ and $\mathbb{X}_1 = \langle X_1, \mathcal{T}_1, (C_i)_{i\in\omega}\rangle$. Let $h$ be a $Z$-computable function witnessing that $f$ is a $Z$-effective spectral map. Then observe that $g(C_i) = f^{-1}(C_i) = B_{h(i)}$, and thus, by identifying basic sets with their indices, we get that the morphism $g=h$ is $Z$-computable.
	
	(b)\ Suppose that $\mathcal{P}_0 = \langle P_0; \leq_0, \varphi_0\rangle$ and $\mathcal{P}_1 = \langle P_1; \leq_1, \varphi_1\rangle$. Let $g$ be a $Z$-computable $\mathbf{DP}$-morphism acting from $P_0$ to $P_1$. Then the spectral map $f = \mathsf{T}(g)$ satisfies the following (see the proof of Claim~5.1 in~\cite{S1}): for $a\in P_0$, $f^{-1}(V_a)  = V_{g(a)}$. Hence, the $Z$-computable function $g$ witnesses that $f$ is a $Z$-effective spectral map from  $\mathsf{T}(\mathcal{P}_1)$ into $\mathsf{T}(\mathcal{P}_0)$.
\end{proof}

\noindent
The following interesting result is a corollary of Lemma~\ref{lem:morphisms}.

\medskip
For a computable $\sigma$-structure $\mathcal{S}$, the \emph{computable dimension} of $\mathcal{S}$ is the number of computable isomorphic copies of $\mathcal{S}$ up to computable isomorphisms. S.\,S. Goncharov~\cite{Gon-80} proved the following

\begin{theorem}\label{T:SSG}
\cite{Gon-80}
For each non-zero natural number $N$, there exists a computable structure $\mathcal{A}$ having computable dimension $N$.
\end{theorem}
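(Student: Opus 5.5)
This result is quoted from \cite{Gon-80} and the paper only uses it, so I plan to follow Goncharov's classical route rather than look for a new argument. The plan has two parts: a coding step, in which structures are built from families of c.e.\ sets so that computable dimension becomes a count of numberings, and a priority construction of a suitable family.

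\emph{Coding step.} For a c.e.\ family $\mathcal{F}$ of pairwise distinct sets, let $\mathcal{G}_\mathcal{F}$ be a graph with one connected component $C_S$ for each $S\in\mathcal{F}$. The component $C_S$ has a root carrying attached cycles of length $n+3$ for exactly the $n\in S$, plus infinitely many padding cycles of a fixed type, so that every component is infinite and the copy remains decidable in the necessary sense.

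\begin{itemize}
\item A one-one computable numbering $\nu$ of $\mathcal{F}$ (that is, a Friedberg numbering) yields a computable copy of $\mathcal{G}_\mathcal{F}$: component $C_{\nu(i)}$ is built by enumerating $\nu(i)$ and adding cycles as elements appear.
\item Conversely, from any computable copy we recover a computable enumeration of the components, and hence a computable numbering of $\mathcal{F}$ by reading off cycle lengths. This numbering is one-one because the sets are distinct and each occurs once.
\item Two copies are computably isomorphic exactly when the induced numberings are equivalent, i.e.\ $\nu = \mu\circ p$ for a computable permutation $p$. One direction is matching components; the other is that a computable matching of roots extends computably to the components.
\end{itemize}

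So the computable dimension of $\mathcal{G}_\mathcal{F}$ equals the number of Friedberg numberings of $\mathcal{F}$ up to equivalence.

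\emph{Construction step.} It remains to build a c.e.\ family $\mathcal{F}$ with exactly $N$ pairwise inequivalent Friedberg numberings $\nu_0,\dots,\nu_{N-1}$, such that every Friedberg numbering of $\mathcal{F}$ is equivalent to one of them. I would build $\nu_0,\dots,\nu_{N-1}$ and $\mathcal{F}$ by a finite-injury (in fact infinite-injury, tree-style) priority argument with two kinds of requirements.

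\begin{itemize}
\item \emph{Inequivalence.} For $k<l<N$ and each partial computable $\varphi_e$, require that $\varphi_e$ does not witness $\nu_k\equiv\nu_l$. This is met by diagonalization: when $\varphi_e(x)$ converges, add a fresh element to $\nu_k(x)$ but not to $\nu_l(\varphi_e(x))$, while keeping the family unchanged up to renaming.
\item \emph{Collapse.} For each computable enumeration $W_{g(\cdot)}$ that looks like a Friedberg numbering of $\mathcal{F}$, build a computable permutation making it equivalent to some $\nu_k$. This is done by tracking, for each $x$, which $\nu_k$-index $W_{g(x)}$ currently matches, via distinguishing ``labels'' placed in the sets. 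Whenever $W_{g(\cdot)}$ tries to escape all $N$ candidate matchings, we either diagonalize it out of being a numbering of $\mathcal{F}$ or force it onto a single $k$.
\end{itemize}

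The main obstacle is the collapse requirement. Guaranteeing that an arbitrary Friedberg numbering of $\mathcal{F}$ must eventually agree, via a computable matching, with one of only $N$ fixed numberings conflicts directly with the freedom the inequivalence requirements need. I would try to organize each $\nu_k$ around distinct labelled ``chains'' of sets, so that any numbering is forced into one of $N$ behaviours that can be detected in the limit. The delicate verification is that each collapse strategy's guess at the index $k$ changes only finitely often along the true path, so that the permutation it outputs is computable. Once that is established, the coding step transfers the count $N$ to $\mathcal{G}_\mathcal{F}$, which proves the theorem.
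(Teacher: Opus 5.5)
The paper does not prove this statement; it imports it from Goncharov \cite{Gon-80}, so for the paper's purposes a citation is enough. Judged as a proof, your proposal has a genuine gap, and it sits exactly where the content of the theorem lies. Your coding step is fine and is the standard reduction. With the padding, roots are c.e.-recognizable, e.g.\ as the only vertices of degree $\geq 3$, provided the padding gadgets are chosen to be distinguishable from the coding cycles. Hence the computable dimension of $\mathcal{G}_\mathcal{F}$ equals the number of Friedberg numberings of $\mathcal{F}$ up to equivalence. But this only reduces the theorem to the existence of a c.e.\ family with exactly $N$ pairwise inequivalent Friedberg numberings, which is the substantive part of Goncharov's work. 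For that part you give a list of requirements, not a construction. The collapse item (``either diagonalize it out of being a numbering of $\mathcal{F}$ or force it onto a single $k$'') restates the goal, and you yourself leave it as an unresolved obstacle.

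Concretely, two things are missing.

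First, the inequivalence move as stated is not admissible. Putting a fresh element into $\nu_k(x)$ but not into $\nu_l(\varphi_e(x))$ changes $\mathcal{F}$, unless two things also happen: the enlarged set gets listed by $\nu_l$ at some other index, and the old set $\nu_l(\varphi_e(x))$ stays listed by $\nu_k$ somewhere. Both numberings are one-one, already partially committed, and their sets can only grow. So you need a coordinated move, such as enlarging sets in pairs so that they exchange roles in one numbering but not in the other, while keeping all members of $\mathcal{F}$ pairwise distinct.

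Second, and more seriously, you cannot diagonalize directly against a foreign numbering $\mu=W_{g(\cdot)}$: you control only the sets, and $\mu$ merely reacts. The idea you need has three parts:
\begin{itemize}
\item each inequivalence move must force every Friedberg numbering of $\mathcal{F}$ to side with one of the $\nu_k$ at that move;
\item successive moves must be linked, so that a $\mu$ that sides with different $\nu_k$ at different moves, or never responds, can be made non-one-one or not onto $\mathcal{F}$;
\item the strategy for $\mu$ must hold up further moves affecting $\mu$ until $\mu$ has responded, which is where the $\Pi^0_2$-guessing tree comes from.
\end{itemize}
Without this forced-choice-and-linkage mechanism, nothing stops $\mu$ from following $\nu_k$ on some indices and $\nu_l$ on infinitely many others. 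In that case no computable permutation carries $\mu$ onto any single $\nu_k$. Your claim that ``the guess at $k$ changes only finitely often'', which you postpone to the verification, is precisely the statement that needs proof. So you should either cite Goncharov's theorem on Friedberg numberings, as the paper in effect does, or supply this mechanism.
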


\begin{corollary}\label{cor:comp-dim}
	Let $N$ be a non-zero natural number. There exists a computable topological space with base $\mathbb{X}$ such that $\mathbb{X}$ has precisely $N$-many computable copies, up to effective spectral homeomorphisms.
\end{corollary}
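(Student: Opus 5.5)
The plan is to transfer Goncharov's Theorem~\ref{T:SSG} through the duality, using Lemma~\ref{lem:morphisms} to move computable isomorphisms in both directions. There are four steps.

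\textbf{Step 1: reduce to a poset.} Start with a computable structure $\mathcal{A}$ of computable dimension $N$. Replace it by a computable poset $\mathcal{S}=\langle\omega;\leq_{\mathcal{S}}\rangle$ of computable dimension $N$. To do this I would invoke the standard fact (Hirschfeldt--Khoussainov--Shore--Slinko, the same paper \cite{HKSS} cited for Proposition~\ref{prop:HKSS}) that the poset coding there preserves computable dimension. This is because the coding is effectively bi-interpretable, so computable copies and computable isomorphisms correspond.

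\textbf{Step 2: build the c-poset and the space.} Form the distributive $c$-poset $\mathcal{P}_{\mathcal{S}}=\langle\omega;\leq_{\mathcal{S}},\dnw\rangle$ as in the proof of Theorem~\ref{theo:degree-spectra}. Put $\mathbb{X}=\mathsf{T}(\mathcal{P}_{\mathcal{S}})$. By Theorem~\ref{prop:complexity-presentations}, indeed computably, since $\dnw$ is given by a computable set $A$, the space $\mathbb{X}$ with $\beta(a)=V_a$ is a computable space with base.

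\textbf{Step 3: match computable copies.} Let $\mathbb{Y}=\langle Y,\mathcal{T},(C_i)\rangle$ be a computable copy of $\mathbb{X}$. By Lemma~\ref{lem:injective}(b), assume the indexing is injective. Then $\mathsf{P}(\mathbb{Y})$ is a computable $c$-poset on $\omega$ that is $\mathbf{DP}$-isomorphic to $\mathcal{P}_{\mathcal{S}}$. Lemma~\ref{L:ds} shows its closure operator is again $\dnw$. By \cite[Lemma~1.8(i)]{S1}, its order reduct is a computable copy of $\mathcal{S}$. Conversely, each computable copy of $\mathcal{S}$ yields a computable $\mathcal{P}$ and hence a computable copy $\mathsf{T}(\mathcal{P})$ of $\mathbb{X}$.

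\textbf{Step 4: match computable isomorphisms.} I must show that two computable copies of $\mathbb{X}$ are effectively spectrally homeomorphic iff the corresponding posets are computably isomorphic. For the forward direction, an effective spectral homeomorphism $f$ gives, via Lemma~\ref{lem:morphisms}(a), a computable $h$ with $f^{-1}(C_i)=B_{h(i)}$. This $h$ is a bijection on indices because the indexing is injective, and it preserves $\subseteq$, so it is a computable poset isomorphism. For the backward direction, a computable poset isomorphism $g$ is a $\mathbf{DP}$-isomorphism, since it preserves $\dnw$ and hence prime ideals. Lemma~\ref{lem:morphisms}(b) then makes $\mathsf{T}(g)$ an effective spectral map, and the same applies to its inverse $\mathsf{T}(g^{-1})$. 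The count of classes is therefore exactly $N$.

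The main obstacle is the bookkeeping in Step 4. One has to check that the duality functors on isomorphisms are mutually inverse up to the canonical homeomorphism $f_{\mathbb{X}}$ of Theorem~\ref{T:TP}, and that this canonical map is itself effective: it sends $V_{B_i}$ back to $B_i$, so the witnessing function is the identity. Only then do effective equivalence classes correspond bijectively rather than merely injectively. A secondary point is Step 1, which relies on the dimension-preserving property of the poset coding rather than just the degree spectrum statement.
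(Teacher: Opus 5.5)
Your proposal is correct and follows essentially the same route as the paper. Both arguments obtain a computable poset $\mathcal{S}$ of computable dimension $N$ from Goncharov's theorem and \cite{HKSS}, and pass to the $c$-poset with $\varphi=\dnw$ and its dual space. Both use Lemma~\ref{L:ds} and \cite[Lemma~1.8(i)]{S1} to turn any computable copy of the space into a computable copy of $\mathcal{S}$. Both then match effective spectral homeomorphisms with computable isomorphisms via Lemma~\ref{lem:morphisms} in both directions, together with the effectiveness (identity witness) of the canonical map $f_{\mathbb{X}}$ from Theorem~\ref{T:TP}. The paper instead fixes representatives $\mathbb{X}_i=\mathsf{T}(\mathcal{P}_{\mathcal{S}_i})$ and separates them via the canonical isomorphisms $\xi_{\mathcal{R}}$, whereas you compare arbitrary computable copies directly; also note that Lemma~\ref{lem:injective}(b) is stated only for c.e.\ spaces, though the same reindexing preserves computability and is witnessed by the identity as an effective homeomorphism.
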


\begin{proof}
It follows from Goncharov's Theorem \ref{T:SSG} and the results of~\cite{HKSS} that there is a computable poset $\mathcal{S}$ having computable dimension $N$.

Let $\mathcal{S}_0$,\ldots, $\mathcal{S}_{N-1}$ be the computable isomorphic copies of $\mathcal{S}$ up to computable isomorphism.
For each $i<N$, we consider the $c$-poset $\mathcal{P}_i=\mathcal{P}_{\mathcal{S}_i}=\langle S_i;\leq_i,\varphi\rangle$ and the $c$-poset
$\mathcal{P}=\mathcal{P}_\mathcal{S}=\langle S;\leq,\varphi\rangle$ where $\varphi(a)=\dnw A$ for each $i<N$ and all $A\subseteq S_i$ as well as for all $A\subseteq S$,
as defined in the proof of Theorem~\ref{theo:degree-spectra}.
Let $\mathbb{X}=\mathsf{T}(\mathcal{P})$.
For each $i<N$, let also $\mathbb{X}_i=\mathsf{T}(\mathcal{P}_i)$. We put $\mathcal{B}_i=\mathcal{B}(\mathbb{X}_i)$ and
\[
\beta_i\colon\omega\to\mathcal{B}_i;\quad
\beta_i\colon n\mapsto V_{f_i(n)},
\]
where $f_i$ is a computable function such that $S_i=f_i(\omega)$. Then we have by Lemma \ref{L:phi}(ii):
\begin{align*}
\operatorname{Inc}_{\mathcal{B}_i}=
&\Bigl\{(j,k)\in\omega^2\mid k\ne 0\ \text{and}\ \beta_i(j)\subseteq\bigcup_{m\in D_k}\beta_i(m)\Bigr\}=\\
&\Bigl\{(j,k)\in\omega^2\mid k\ne 0\ \text{and}\ f_i(j)\in\varphi\bigl(f_i(D_k)\bigr)\Bigr\}=\\
&\bigl\{(j,k)\in\omega^2\mid k\ne 0\ \text{and}\ f_i(j)\leq_i f_i(m)\ \text{for some}\ m\in D_k\bigr\}.
\end{align*}
Since $f_i$ and $\leq_i$ are computable, we conclude that $\operatorname{Inc}_{\mathcal{B}_i}$ is a computable set.
Therefore, $\mathbb{X}_i$ is a computable topological space with base for all $i<N$.

Let $\mathbb{X}^\prime=\langle X^\prime,\mathcal{T}^\prime,\mathcal{B}^\prime\rangle$ be a computable topological space with base such that
$\mathbb{X}^\prime$ and $\mathbb{X}$ are $\mathbf{AS}$-isomorphic.
By definition, there is $\beta\colon\omega\to\mathcal{B}^\prime$ which makes the set $\operatorname{Inc}_{\mathcal{B}^\prime}$ computable.
By duality, the $c$-posets
$\mathcal{P}^\prime=\mathsf{P}(\mathbb{X}^\prime)=\langle\mathcal{B}^\prime;\subseteq,\varphi_{\mathcal{B}^\prime}\rangle$ and $\mathcal{P}$
are $\mathbf{DP}$-isomorphic whence $\langle\mathcal{B}^\prime;\subseteq\rangle$ and $\mathcal{S}$ are isomorphic as posets
and $\varphi_{\mathcal{B}^\prime}(\mathcal{C})=\dnw\mathcal{C}$ for all $\mathcal{C}\subseteq\mathcal{B}^\prime$ by Lemma \ref{L:ds}.
We show that $\langle\mathcal{B}^\prime;\subseteq\rangle$ is a computable poset. Indeed, consider the following function
\[
\begin{cases}
&u(0)=0;\\
&u(n+1)=\mu\,i\,\bigl[\beta(i)\notin\bigl\{\beta(0),\ldots,\beta(n)\bigr\}\bigr].
\end{cases}
\]
Since $\mathcal{B}^\prime$ is a countable base, the function $u$ is well-defined and a strictly increasing computable function.
Hence, $\operatorname{im}u$ is a computable set.
Identifying each basic set from $\mathcal{B}^\prime$ with its number, we conclude that $\mathcal{B}^\prime$ is computable.
Since the set $\bigl\{(i,j)\in\omega^2\mid\beta(u(i))\subseteq\beta(u(j))\bigr\}$ is computable, we conclude that
$\langle\mathcal{B}^\prime;\subseteq\rangle$ is indeed a computable poset.
By our assumption, there is $i<N$ and a computable isomorphism $\alpha\colon\mathcal{S}_i\to\langle\mathcal{B}^\prime;\subseteq\rangle$.
Hence, $\alpha\colon\mathcal{P}_i\to\mathcal{P}^\prime$ is a computable $\mathbf{DP}$-isomorphism.

By Lemma~\ref{lem:morphisms}(ii) and duality,
$\mathsf{T}(\alpha)\colon\mathsf{T}(\mathcal{P}^\prime)\to\mathsf{T}(\mathcal{P}_i)=\mathbb{X}_i$ is an effective $\mathbf{AS}$-isomorphism.
It follows from Theorem \ref{T:TP} that $f_{\mathbb{X}^\prime}\colon\mathbb{X}^\prime\to\mathsf{T}\mathsf{P}(\mathbb{X}^\prime)=\mathsf{T}(\mathcal{P}^\prime)$
is also an effective $\mathbf{AS}$-isomorphism. Hence, $f_{\mathbb{X}^\prime}\mathsf{T}(\alpha)\colon\mathbb{X}^\prime\to\mathbb{X}_i$
is an effective $\mathbf{AS}$-isomorphism.

Finally, if there is an effective $\mathbf{AS}$-isomorphism $g\colon\mathbb{X}_i\to\mathbb{X}_j$ for some $i<j<N$ then, by Lemma \ref{lem:morphisms}(i) and duality,
$\mathsf{P}(g)$ is a computable $\mathbf{DP}$-isomorphism from $\mathsf{P}(\mathbb{X}_j)=\mathsf{P}\mathsf{T}(\mathcal{P}_j)$ to
$\mathsf{P}(\mathbb{X}_i)=\mathsf{P}\mathsf{T}(\mathcal{P}_i)$.
It follows from \cite[Claim 5.5]{S1} and Lemma \ref{L:phi}(iii) that $\xi_\mathcal{R}\colon\mathsf{P}\mathsf{T}(\mathcal{R})\to\mathcal{R}$ is a computable $\mathbf{DP}$-isomorphism for each $c$-poset $\mathcal{R}$.
Thus, $h=\xi_{\mathcal{P}_i}\mathsf{P}(g)\xi^{-1}_{\mathcal{P}_j}\colon\mathcal{P}_j\to\mathcal{P}_i$ is a computable $\mathbf{DP}$-isomorphism
which is a contradiction.

This contradiction implies that $\mathbb{X}_0$,\ldots, $\mathbb{X}_{N-1}$ are the computable $\mathbf{AS}$-isomorphic copies of $\mathbb{X}$
up to effective spectral homeomorphisms whence $\mathbb{X}$ is of computable dimension $N$.
\end{proof}

\noindent
We note that, to our best knowledge, it is still unknown whether an uncountable, computably presentable Polish space can have finitely many computable copies, up to \emph{effective homeomorphisms}.


\section*{Appendix}

\noindent
The following tableau of full subcategories of the category $\mathbf{DP}$ and their dual full subcategories of the category $\mathbf{AS}$ is presented in \cite{S2}.

\vskip5mm
\begin{center}
\begin{tabular}{|l|p{2.2in}|l|p{2.2in}|} \hline
$\mathbf{DP}$ 
& Objects: distributive $c$-posets 
& $\mathbf{AS}$ 
& Objects: almost semispectral spaces with base \\
 & Morphisms: strict mappings
 & 
& Morphisms: spectral mappings \\ \hline
$\mathbf{DP}_0$ 
& Objects: distributive $c$-posets with $0$
& $\mathbf{AS}_s$ 
& Objects: [sober] almost semispectral spa\-ces with $0$-base \\
 & Morphisms: strict mappings [preserving $0$]
 & 
& Morphisms: spectral mappings \\ \hline
$\mathbf{DP}_1$ 
& Objects: distributive $c$-posets with $1$
& $\mathbf{AS}_c$ 
& Objects: [compact] almost semispectral spa\-ces with $1$-base \\
 & Morphisms: strict mappings [preserving $1$]
 & 
& Morphisms: spectral mappings \\ \hline
\end{tabular}
\end{center}

\begin{center}
\begin{tabular}{|l|p{2.2in}|l|p{2.2in}|} \hline
$\mathbf{DP}_{01}$ 
& Objects: distributive $c$-posets with $0$ and $1$
& $\mathbf{S}$ 
& Objects: semispectral spaces with $(0,1)$-base \\
 & Morphisms: strict mappings [preserving $0$ and $1$]
 & 
& Morphisms: spectral mappings \\ \hline
$\mathbf{DSL}^\wedge$ 
& Objects: distributive $\wedge$-semi\-latt\-ices 
& $\mathbf{ASp}$ 
& Objects: almost spectral spaces with base \\
 & Morphisms: strict $\wedge$-semi\-latt\-ice homomorphisms
 & 
& Morphisms: spectral mappings \\ \hline
$\mathbf{DSL}^\wedge_0$ 
& Objects: distributive $\wedge$-semi\-latt\-ices with $0$
& $\mathbf{ASp}_s$ 
& Objects: sober almost spectral spa\-ces with base \\
 & Morphisms: strict $(\wedge,0)$-semi\-latt\-ice homomorphisms
 & 
& Morphisms: spectral mappings \\ \hline
$\mathbf{DSL}^\wedge_1$ 
& Objects: distributive $\wedge$-semi\-latt\-ices with $1$
& $\mathbf{ASp}_c$ 
& Objects: [compact] almost spectral spa\-ces with $1$-base \\
 & Morphisms: strict $(\wedge,1)$-semi\-latt\-ice homomorphisms
 & 
& Morphisms: spectral mappings \\ \hline
$\mathbf{DSL}^\wedge_{01}$ 
& Objects: distributive $\wedge$-semi\-latt\-ices with $0$ and $1$
& $\mathbf{Sp}$ 
& Objects: spectral spaces with $1$-base \\
 & Morphisms: strict $(\wedge,0,1)$-semi\-latt\-ice homomorphisms
 & 
& Morphisms: spectral mappings \\ \hline
$\mathbf{DSL}^\vee$ 
& Objects: distributive $\vee$-semi\-latt\-ices 
& $\mathbf{AsSpec}$ 
& Objects: almost semispectral spaces with additive base \\
 & Morphisms: strict $\vee$-semi\-latt\-ice homomorphisms
 & 
& Morphisms: spectral mappings \\ \hline
$\mathbf{DSL}^\vee_0$ 
& Objects: distributive $\vee$-semi\-latt\-ices with $0$
& $\mathbf{AsSpec}_s$ 
& Objects: [sober] almost semispectral spaces with additive $0$-base \\
 & Morphisms: strict $(\vee,0)$-semi\-latt\-ice homomorphisms
 & 
& Morphisms: spectral mappings \\ \hline
$\mathbf{DSL}^\vee_1$ 
& Objects: distributive $\vee$-semi\-latt\-ices with $1$
& $\mathbf{AsSpec}_c$ 
& Objects: compact almost semispectral spaces with additive base \\
 & Morphisms: strict $(\vee,1)$-semi\-latt\-ice homomorphisms
 & 
& Morphisms: spectral mappings \\ \hline
$\mathbf{DSL}^\vee_{01}$ 
& Objects: distributive $\vee$-semi\-latt\-ices with $0$ and $1$
& $\mathbf{sSpec}$ 
& Objects: semispectral spaces with additive $0$-base \\
 & Morphisms: strict $(\vee,0,1)$-semi\-latt\-ice homomorphisms
 & 
& Morphisms: spectral mappings \\ \hline
$\mathbf{DL}$ 
& Objects: distributive lattices 
& $\mathbf{ASpec}$ 
& Objects: almost spectral spaces \\
 & Morphisms: strict latt\-ice homomorphisms
 & 
& Morphisms: spectral mappings \\ \hline
$\mathbf{DL}_0$ 
& Objects: distributive lattices with $0$
& $\mathbf{ASpec}_s$ 
& Objects: sober almost spectral spaces \\
 & Morphisms: strict $0$-lattice homomorphisms
 & 
& Morphisms: spectral mappings \\ \hline
\end{tabular}
\end{center}

\begin{center}
\begin{tabular}{|l|p{2.2in}|l|p{2.2in}|} \hline
$\mathbf{DL}_1$ 
& Objects: distributive lattices with $1$
& $\mathbf{ASpec}_c$ 
& Objects: compact almost spectral spaces \\
 & Morphisms: strict $1$-latt\-ice homomorphisms
 & 
& Morphisms: spectral mappings \\ \hline
$\mathbf{DL}_{01}$ 
& Objects: distributive lattices with $0$ and $1$
& $\mathbf{Spec}$ 
& Objects: spectral spaces \\
 & Morphisms: $(0,1)$-lattice homomorphisms
 & 
& Morphisms: spectral mappings \\ \hline
\end{tabular}
\end{center}

\end{document}